\theoremstyle{plain}
\newtheorem{theorem}[subsection]{Theorem}
\newtheorem{lemma}[subsection]{Lemma}
\newtheorem{proposition}[subsection]{Proposition}
\newtheorem{corollary}[subsection]{Corollary}
\theoremstyle{definition}
\newtheorem{remark}[subsection]{Remark}
\newtheorem{definition}[subsection]{Definition}
\newtheorem{example}[subsection]{Example}
\numberwithin{equation}{section}
\title[Positivity of sums and integrals for $\nabla$-convex and Completely monotonic functions]{Positivity of sums
and integrals for higher order $\nabla-$convex and completely monotonic functions}
\author{Muhammad Adnan$^{1}$}
\address{1-Department of Mathematics, University of Karachi, University Road, Karachi-75270, Pakistan}
\email{adnanalfa84@yahoo.com}
\author{Asif R. Khan$^{1}$}
\email{asifrk@uok.edu.pk}
\author{Faraz Mehmood$^{1,2}$}
\address{2-Dawood University of Engineering and Technology, Karachi, Pakistan}
\email{faraz.mehmood@duet.edu.pk}
\email{farazmehmood@yahoo.com}
\date{April 10, 2017}
\subjclass[2010]{26A51, 39B62, 26D15, 26D20, 26D99.} \dedicatory{ } \keywords{convex functions, $\nabla-$convex functions, completely monotonic functions}
\begin{document}
\thanks{\textbf{Acknowledgement:} We are very much thankful to Prof. J. E.~Pe\v cari\' c and Prof. Sanja Varosane\'c for their useful comments and suggestion for this article.}

\begin{abstract}
We extend the definitions of $\nabla-$convex and completely monotonic functions for two variables. Some general identities of Popoviciu type for sum $\sum \sum p_{ij} f(y_i, z_j)$ and integrals $\int P(y)f(y) dy$, $\int \int P(y,z) f(y,z) dy \, dz$
are deduced. Using obtained identities, positivity of these expressions are characterized for  higher order $\nabla-$convex and completely monotonic functions. Some applications in terms of generalized Cauchy means and exponential convexity are given.
\end{abstract}
\maketitle
\section{\bf{Introduction}}
 Over past few decades the notion of completely monotonic functions has gained much popularity among researchers in analysis and other related fields due to their interesting properties (see  \cite{[16]}) and higher applicability (see \cite{[6]}). As it is evident from the following lines taken from a paper with the title ``Completely monotone functions: a digest'' \cite{[CMF]} written by Milan Merkle. He writes ``A brief search in MathSciNet reveals total of 286 items that mention this class of functions in the title from 1932 till the end of the year 2011; 98 of them have been published since the beginning of 2006''. We would like to obtain discrete identity and inequality of two variables and we would like to state general integral identities and inequalities respectively for higher order differentiable function of one variable and two variables. These identities and inequalities would be generalization of several established results. We would also discuss the characterization of Popoviciu$-$type  positivity of these general integrals involving $\nabla-$convex and completely monotonic functions. We would give new generalized, mean value theorems of Lagrange and Cauchy$-$type as well, and also discuss its exponential convexity with the help of various examples.

Let us recall, few useable definitions and significant results regarding the convex functions extracted from \cite{redbook} (see also \cite{Asif-popoviciu}). Throughout the article $I$ an interval in $\mathbb{R}$. Also throughout the article we would use the following notations for some subsets of $\mathbb{R}$, $\mathbb{R_{*}}=[0,\infty)$ and $\mathbb{R_{+}}=(0,\infty)$.
\begin{definition}
The \emph{$m-$divided difference} of  a function $f:I\to \mathbb{R}$, at different points $y_i,y_{i+1},\ldots \\,y_{i+m}\in I=[a,b]\subset \mathbb{R}$, where $i\in \mathbb{N}$ is stated as:
\begin{eqnarray*}
 \left[y_j;f\right]&=&f\left(y_j\right),\quad j\in\{i,i+1,\ldots, i+m\} \\
 \left[y_i,\ldots ,y_{i+m};f\right]&=&\frac{\left[y_{i+1},\ldots ,y_{i+m};f\right]-\left[y_i,\ldots  ,y_{i+m-1};f\right]}
{y_{i+m}-y_i}.
\end{eqnarray*}
\end{definition}
\begin{remark}
Let us denote
\([y_i,y_{i+1},\cdots ,y_{i+m};f]\) by $ \Delta_{(m)} f(y_i).$ The value $\left[y_i,\ldots ,y_{i+m};f \right]$ is independent of order of points $y_i,y_{i+1},\ldots ,y_{i+m}$. This definition can extended by including the cases for more than one points coincide by applying the respective limits.
\end{remark}
\begin{definition}
A function $f:I\rightarrow \mathbb{R} $, is called a \emph{$m-$convex} or \emph{$m$th order convex} , if the inequality $\Delta_{(m)}f(y_i)\geq 0$ holds $\forall (m+1)$ different points $y_i,\ldots ,y_{i+m} \in I$.

Further that if $m$th order derivative of function exists, then function is convex of order $m$ if and only if $f^{(m)}\geq 0$.
\end{definition}
\begin{definition}
A function $f:I\rightarrow \mathbb{R} $, is said to be $m-\nabla-$convex or \emph{$\nabla-$convex of order $m$}, if $\forall (m+1)$ different points $y_i,y_{i+1},\ldots ,y_{i+m}$ we have
$\nabla_{(m)} f(y_i)=(-1)^m \Delta_{(m)}f(y_i) \geq 0$.

Further that if $m$th order derivative of function exists, then function is $\nabla-$convex of order $m$ if and only if $(-1)^{m}f^{(m)}\geq 0$.
\end{definition}
\begin{definition}
Let $E =\{y_{1},y_{2},\ldots,y_{M}\}\subset\mathbb{R}$. A function $f : E \rightarrow\mathbb{R}$ is called a discrete $m-$convex function if inequality $
[y_{i},\ldots,y_{i+m};f]\geq 0$
holds $\forall (m+1)$ different points $y_{i},\ldots,y_{i+m}\in E$.
\end{definition}
We extend all the aforementioned definitions up to order $(m,n)$. For that let us denote $I\times J=[a,b]\times[c,d]\subset \mathbb{R}^2$.
\begin{definition}
Let $f:I\times J\rightarrow \mathbb{R}$, be a function, then The \emph{$(m,n)-$divided difference} or \emph{divided difference of order $(m,n)$}, of a function $f$ at different points $y_i,\ldots,y_{i+m} \in I$, $z_j,\ldots,z_{j+n} \in J$ for some $i,j\in\mathbb{N}$, is ststed as
$\Delta_{(m,n)}f(y_i,z_j)=[y_i,\ldots,y_{i+m};
[z_j,\ldots,z_{j+n};f]].$
\end{definition}
\begin{definition}
A function $f:I\times J \rightarrow \mathbb{R}$, is said to be \emph{$(m,n)-$convex} or \emph{convex of order $(m,n)$},
if $\forall$ different points $y_i,\ldots,y_{i+m} \in I$ and $z_j,\ldots,z_{j+n} \in J$ we have $\Delta_{(m,n)} f(y_i,z_j) \geq 0$.

Further that the $f$ is $(m,n)-$convex if and only if $f_{(m,n)} \geq 0$, if the partial derivative $ \frac{\partial^{m+n} f}{\partial y^m \partial z^n} $ denoted by $f_{(m,n)}$ and exists.
\end{definition}
\begin{definition}
Let $E =\{y_{1},y_{2},\ldots,y_{M}\}, F =\{z_{1},z_{2},\ldots,z_{N}\}\subset\mathbb{R}$. A function $f : E\times F \rightarrow\mathbb{R}$ is called a discrete $(m,n)-$convex function if inequality $
[y_i,\ldots,y_{i+m};[z_j,\ldots,z_{j+n};f]]\geq 0 $
holds $\forall (m+1)$ different points $y_{i},\ldots,y_{i+m}\in E$ and $(n+1)$ different points $z_{j},\ldots,z_{j+n}\in F$.
\end{definition}
\begin{definition}
\emph{Finite difference of order (m,n)} of a function $f:I\times J\rightarrow \mathbb{R}$, where $h,k\in \mathbb{R}$ and $y\in I$, $z\in J$, is stated as
\begin{eqnarray*}
\Delta_{h,k}^{(m,n)}f(y,z)&=&\Delta_{h}^{(m)}(\Delta_{k}^{(n)}f(y,z))=\Delta_{k}^{(n)}(\Delta_{h}^{(m)}f(y,z))\\
&=&\sum^{m}_{i=0}\sum^{n}_{j=0}(-1)^{m+n-i-j}\binom {m}i\binom {n}j f(y+ih,z+jk).
\end{eqnarray*}
where $y+ih,z+jk \in I,J$ respectively and $i\in\{0,1,2,\ldots,m-1,m\}$; $j\in\{0,1,2,\ldots,n-1,n\}$.
Moreover, a function $f:I\times J\to \mathbb{R}$ is called the $(m,n)-$convex, if the follwoing conditions hold $\Delta^{(m,n)}_{h,k} f(y,z) \geq 0$ $\forall y\in I$, $z\in J$.
\end{definition}
\begin{definition}
\emph{Finite difference and Divided difference of $(m,n)$ order, of a sequence $(a_{ij})$ are stated as $\Delta^{(m,n)} a_{ij} = \Delta^{(m,n)}_{1,1} f(y_i,z_j)$ and $\Delta_{(m,n)} a_{ij} = \Delta_{(m,n)} f(y_i,z_j)$ respectively, where $i\in\{1,2,3,\ldots,m-1,m\}$, $j\in\{1,2,3,\ldots,n-1,n\}$. If $y_i = i$, $z_j=j$, then $f:\{1,\ldots,m\}\times\{1,\ldots,n\}\to \mathbb{R}$ is the function which is $f(i,j) = a_{ij}$. Moreover, a sequence $(a_{ij})$ is called a $(m,n)-$convex},
if following conditions hold $\Delta^{(m,n)} a_{ij} \geq 0$ for $m,n\ge0$ and $i,j\in\{1,2,3,\ldots\}$.
\end{definition}
Further, in this paper we would use the following notations: $I\times J=[a,b]\times [c,d]\subset \mathbb{R}\times \mathbb{R}$.
% Let for fixed integer $a_{0}$, $n\in \mathbb{N}$: $$a_{0}^{(n)}=a_{0}(a_{0}-1)\cdots(a_{0}-n+1), \quad  a_{0}^{(0)}=1.$$
For some real sequence $(a_m)$, $m\in \mathbb{N}$ and $n\in \{2,3,\ldots\}$ :
%$$\Delta^{(1)} a_m=\Delta a_m=a_{m+1}-a_m,\quad \Delta^{(n)}a_{m}=\Delta(\Delta^{(n-1)}a_m)$$   and
$$\nabla^{(1)} a_m=\nabla a_m=a_{m}-a_{m+1},\, \quad \nabla^{(n)}a_{m}=\nabla(\nabla^{(n-1)}a_m).$$
 Also for $m$ distinct real numbers $y_i$, $i\in \{1,\ldots, m\}$ and $n\ge0$:
 %$$(y_k-y_i)^{(n+1)}=(y_k-y_i)(y_k-y_{i+1})\cdots(y_k-y_{i+n}),\quad (y_k-y_i)^{(0)}=1$$  and
$$(y_k-y_i)^{\{n+1\}}=(y_k-y_i)(y_{k-1}-y_{i})\ldots(y_{k-n}-y_{i}),\quad (y_k-y_i)^{\{0\}}=1.$$
\begin{definition}
We would call the function $f:I\times J \rightarrow \mathbb{R}$ is called the $(m,n)-\nabla-$convex if inequality
$\nabla_{(m,n)} f(y_i,z_j)=(-1)^{m+n} \Delta_{(m,n)}f(y_i,z_j) \geq 0$, holds $\forall$ different points
$y_i,\ldots,y_{i+m} \in I$, $z_j,\ldots,z_{j+n} \in J$.
\end{definition}
Let us recall a useful definition from \cite[pp.~347-348]{[CANI]}.
\begin{definition}
A function $f:I\rightarrow \mathbb{R}$, is called the completely monotonic (or totally monotonic) of order $m$ or $m-$completely monotonic
if all its derivatives $f^{(i)}$ exist and satisfy
\begin{equation*}%\label{m-$C$-convex}
(-1)^{i}f^{(i)}(y)\ge0, \quad y\in(0,\infty); \quad i\in\{0,1,\ldots,m\}.%\; i \in \{ 0, 1, 2 ,\ldots , m\}.
\end{equation*}
\end{definition}
\begin{definition}
We wolud call the function $f:I\times J\rightarrow \mathbb{R}$ is known as completely monotonic of order $(m,n)$ or $(m,n)-$completely monotonic
if all its $f_{(i,j)}$ partial derivatives  exist and satisfy the condition below:
\begin{equation*}%\label{m-$C$-convex}
(-1)^{(j+i)}f_{(i,j,)}(y,z)\ge0, \quad y,z,\in(0,\infty); \quad i\in\{0,1,2,\cdots,m-1,m \}, \quad j\in\{0,1,2,\ldots,n-1,n\}.%\; i \in \{ 0, 1, 2 ,\ldots , m\}.
\end{equation*}
\end{definition}
\begin{remark}\label{nable-CMF}
It is simple to observe that the notions of completely monotonic function order $m$ and $(m,n)$ are generalized notions of $m-\nabla-$convex function and $(m,n)-\nabla-$convex function for differentiable functions respectively.
\end{remark}
\subsection{Examples} %with Applications}

In present subsection, we would use variety of classes of completely monotonic functions $F=\{f_{v}:v \in
I\subset\mathbb{R}\}$ and construct examples and applications of completely monotonic functions .

\begin{example}
%\label{example}
Let a family of functions $F_{1}= {\{\psi_{v}:\mathbb{R}\rightarrow
\mathbb{R_{+}} | v\in \mathbb{R_{+}}}\}$ which is stated as
\[\label{l2.101}\psi_{v}(y)\,=\,
\begin{array}
[c]{ll}%
\frac{e^{-vy}}{v^{m}}
\end{array}
\]
Since $(-1)^m\frac{d^{m}}{dy^{m}}\psi_{v}(y)>0$,
therefore the function $\psi_{v}$ is $m-$completely monotonic on $\mathbb{R}$, for every $v\in \mathbb{R_{+}}$.
\end{example}
\begin{example}
Let a family of functions $F_{2}=\{\phi_{v}:\mathbb{R_{+}}\rightarrow\mathbb{R} | v\in\mathbb{R_{+}}\}$ which is stated as
\begin{equation*}
  \phi_{v}(y)=
\left\{\begin{array}{rl}
\frac{v^{-y}}{(\ln v)^{m}} & ,\quad v\neq1\\
\frac{(-1)^{m}y^{m}}{m!} & ,\quad v=1
\end{array}\right.
\end{equation*}
Since $(-1)^m\frac{d^{m}}{dy^{m}}\phi_{v}(y)\geq 0$,
%for $i\in\{1,\ldots,m\}$
therefore the function $\phi_{v}$ is $m-$completely monotonic on $\mathbb{R_{+}}$ for every $v\in \mathbb{R_{+}}, y\geq 0$.
\end{example}

\begin{remark}
Other examples of completely monotonic functions include:
\begin{enumerate}
\item[(i)]$f(y)=c$ (a nonnegative real constant),\quad$\forall{y} \in \mathbb{R}$
\item[(ii)]$f(y)=\frac{\alpha}{y^{1-\alpha}}$, \quad$0\le\alpha\le1$, $y>0$
\item[(iii)]$f(y)=\frac{1}{(y+{\alpha}^2)^\beta}$, \quad$\alpha\ge0, \; \beta\ge0$, $y>0$
\item[(iv)] $f(y)=-\ln y$ \quad $\forall y \in \mathbb{R_{*}}$
\item[(v)] $f(y)=-\ln(1-1/y)$, \quad $\forall{y} \in \mathbb{R_{+}}$
\item[(vi)] $f(y)=e^{1/y}$,\quad $\forall y\in \mathbb{R_{+}}$
\end{enumerate}
\end{remark}
Let us give brief explanation the formate of our paper as follows, after introduction and some preliminary, in the second section, we consider one identity for the integral $\int P(y) f(y)dy$ which involves functions of higher order derivatives. This identity is basic tool for obtaining necessary and sufficient conditions for $(m+1)-\nabla-$convex function in which $\int P(y) f(y)dy \geq 0$ holds and only necessary condition for $(m+1)-$completely monotonic function, after second section we will work on third section in which obtain an identity for the sum $\sum_{i=1}^M\sum_{j=1}^N p_{ij}f(y_i,z_j)$ and investigate the inequality $ \sum_{i=1}^M\sum_{j=1}^N p_{ij}f(y_i,z_j) \geq 0$ for $\nabla-$convex function of order $(m,n)$ for two variables. The forth section is devoted to the integral case for $(M+1,N+1)-\nabla-$convex functions and $(M+1,N+1)-$completely monotonic functions for two variables,then consider an identity of linear functional $\Lambda(f)$ in double integral. In fifth section we state some mean value theorem of Lagrange and Cauchy types. In sixth section, consider the nonnegative functional $\Lambda(f)$ and apply this on exponentially convex functions $\psi^{(q)}$ of certain type,  and give some properties. In the last section of the paper, construct examples and applications of completely monotonic, exponentially convex functions by using various classes of functions.
\section{ \bf{Integral Case for Function of One Variable}}
In the paper
 \cite{hung1} the following result for a real sequence  $(a_M)$ was proved:
\begin{proposition}\label{proposition}
Let $p_i\in \mathbb{R}$ for $i\in\{1, \ldots ,M\}$, then the following identity for any real sequence $(a_M)$ holds:
\begin{eqnarray}
\sum_{i=1}^Mp_ia_i&=&\sum_{k=0}^{m-1} \frac{1}{k!}\nabla^{(k)}a_{M-k}
\sum_{i=1}^{M-k}(M-i)^{\{k\}}p_i  \nonumber \\ \label{2}
&+&\frac{1}{(m-1)!}\sum_{k=1}^{M-m}\left(\sum_{i=1}^k(k-i+m-1)^{\{m-1\}}p_i\right)\nabla^{(m)}a_{k}
\end{eqnarray}
 \end{proposition}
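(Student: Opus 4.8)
The plan is to prove the identity by induction on $m$, using Abel's summation by parts as the driving mechanism and a hockey-stick type summation formula for falling factorials as the only nontrivial combinatorial input. Throughout I would abbreviate the inner weight of the remainder at order $m$ by $Q_k^{(m)} = \sum_{i=1}^k (k-i+m-1)^{\{m-1\}} p_i$, so that the asserted identity reads $\sum_{i=1}^M p_i a_i = \sum_{k=0}^{m-1}\frac{1}{k!}\nabla^{(k)} a_{M-k}\sum_{i=1}^{M-k}(M-i)^{\{k\}} p_i + \frac{1}{(m-1)!}\sum_{k=1}^{M-m} Q_k^{(m)}\,\nabla^{(m)} a_k$.

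First I would settle the base case $m=1$. Here the leading sum reduces to its single $k=0$ term $a_M \sum_{i=1}^M p_i$ (using $(M-i)^{\{0\}} = 1$ and $\nabla^{(0)} a_M = a_M$), while the remainder is $\sum_{k=1}^{M-1}\big(\sum_{i=1}^k p_i\big)(a_k - a_{k+1})$. Writing $P_k = \sum_{i=1}^k p_i$ and applying the standard summation-by-parts formula $\sum_{i=1}^M p_i a_i = P_M a_M + \sum_{k=1}^{M-1} P_k(a_k - a_{k+1})$ recovers exactly the $m=1$ statement, since $\nabla^{(1)} a_k = a_k - a_{k+1}$.

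For the inductive step I would assume the identity at level $m$ and apply Abel summation to its remainder $\frac{1}{(m-1)!}\sum_{k=1}^{M-m} Q_k^{(m)}\,\nabla^{(m)} a_k$. Setting $c_k = \nabla^{(m)} a_k$, so that $c_k - c_{k+1} = \nabla^{(m+1)} a_k$, and $U_k = \sum_{j=1}^k Q_j^{(m)}$, summation by parts gives $\sum_{k=1}^{M-m} Q_k^{(m)} c_k = U_{M-m} c_{M-m} + \sum_{k=1}^{M-m-1} U_k(c_k - c_{k+1})$. The boundary term is intended to supply the new leading summand (the $k=m$ term) of the order-$(m+1)$ identity, and the residual sum is intended to be its new remainder, provided the partial sums $U_k$ collapse into the expected falling-factorial weights.

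This collapsing is the heart of the argument and the step I expect to be the main obstacle. Interchanging the order of summation gives $U_k = \sum_{i=1}^k p_i \sum_{\ell=0}^{k-i}(\ell+m-1)^{\{m-1\}}$, so everything hinges on the identity $\sum_{\ell=0}^{n}(\ell+m-1)^{\{m-1\}} = \frac{1}{m}(n+m)^{\{m\}}$, which is the hockey-stick relation $\sum_{\ell=0}^n \binom{\ell+m-1}{m-1} = \binom{n+m}{m}$ rewritten through $(\ell+m-1)^{\{m-1\}} = (m-1)!\binom{\ell+m-1}{m-1}$. Granting this, $U_k = \frac{1}{m}\sum_{i=1}^k (k-i+m)^{\{m\}} p_i$; the boundary term then becomes $\frac{1}{m!}\nabla^{(m)} a_{M-m}\sum_{i=1}^{M-m}(M-i)^{\{m\}} p_i$ (using $M-m-i+m = M-i$), which is precisely the $k=m$ term, and the residual sum becomes $\frac{1}{m!}\sum_{k=1}^{M-(m+1)} Q_k^{(m+1)}\,\nabla^{(m+1)} a_k$. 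Since the terms $k=0,\dots,m-1$ of the leading sum are independent of the order and hence unchanged, this completes the induction. Beyond the hockey-stick identity, the only care needed is honest bookkeeping of the summation ranges ($M-m$ versus $M-m-1$) and of the index shift hidden in $c_k - c_{k+1} = \nabla^{(m+1)} a_k$.
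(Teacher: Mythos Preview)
Your induction via Abel summation is correct: the base case is the standard summation-by-parts formula, and in the inductive step the interchange of summation plus the hockey-stick identity $\sum_{\ell=0}^{n}(\ell+m-1)^{\{m-1\}}=\tfrac{1}{m}(n+m)^{\{m\}}$ indeed yields $U_k=\tfrac{1}{m}Q_k^{(m+1)}$, so the boundary term furnishes the new $k=m$ summand and the residual sum becomes the order-$(m+1)$ remainder with the correct range $1\le k\le M-(m+1)$.

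As for comparison with the paper: there is nothing to compare against. The paper does not give a proof of this proposition; it merely quotes it as a known result from \cite{hung1} (Milovanovi\'c and Pe\v cari\'c) and then states its generalization, Proposition~\ref{lemma}, which is likewise imported from \cite{Asif-popoviciu}. Your argument therefore supplies a self-contained proof where the paper offers none, and the approach you take---iterated Abel summation---is in fact the natural one and essentially how such identities are established in the cited sources.
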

Similar result was proved in \cite{Asif-popoviciu} for the real function  involving the operator $\nabla$ and it is
a generalization of (\ref{2}) which may be stated as:
\begin{proposition}\label{lemma}
Let $m, M$ be  integers such that $m\leq M$ and let $p_i$ be real numbers for $i\in\{1,2,\cdots,M\}$ . Let $f$ be a function and $y_i$ be non mutual elements from interval $I$ for $i\in\{1,2,\ldots,M\}$, then following identity holds:
\begin{eqnarray}\label{four}
\sum_{i=1}^M p_i f(y_i)&=&\sum_{k=0}^{m-1}\left(\sum_{j=1}^{M-k}p_j(y_M-y_j)^{\{k\}}\right)
\nabla_{(k)}f(y_{M-k})  \\ \nonumber
 &+&\sum_{k=1}^{M-m}\left(\sum_{j=1}^kp_j(y_{k+m-1}-y_{j})^{\{m-1\}}\right)\nabla_{(m)}f(y_k)
(y_{k+m}-y_{k})
\end{eqnarray}
\end{proposition}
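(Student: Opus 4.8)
The plan is to argue by induction on $m$, treating the base case $m=1$ as a plain Abel summation by parts and realizing each inductive step as one further summation by parts that lifts the order of the divided differences from $m$ to $m+1$.

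\emph{Base case.} For $m=1$ we have $(y_M-y_j)^{\{0\}}=(y_k-y_j)^{\{0\}}=1$, $\nabla_{(0)}f(y_M)=f(y_M)$ and $\nabla_{(1)}f(y_k)=-[y_k,y_{k+1};f]=-\frac{f(y_{k+1})-f(y_k)}{y_{k+1}-y_k}$. Writing $P_k=\sum_{j=1}^k p_j$ with $P_0=0$ and substituting, the right-hand side of (\ref{four}) collapses to $P_Mf(y_M)-\sum_{k=1}^{M-1}P_k\bigl(f(y_{k+1})-f(y_k)\bigr)$, which is precisely the Abel rearrangement of $\sum_{i=1}^M p_if(y_i)=\sum_{i=1}^M(P_i-P_{i-1})f(y_i)$.

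\emph{Inductive step.} Assume (\ref{four}) holds for some $m<M$; its final block of terms is the remainder
\[
S_m=\sum_{k=1}^{M-m}c_k\,\nabla_{(m)}f(y_k),\qquad c_k=\Bigl(\sum_{j=1}^k p_j(y_{k+m-1}-y_j)^{\{m-1\}}\Bigr)(y_{k+m}-y_k).
\]
I would apply Abel summation to $S_m$ with partial sums $C_k=\sum_{l=1}^k c_l$ and then invoke the defining recurrence of the divided difference, which in $\nabla$-form reads $\nabla_{(m)}f(y_{k+1})-\nabla_{(m)}f(y_k)=-(y_{k+m+1}-y_k)\,\nabla_{(m+1)}f(y_k)$. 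This produces the boundary term $C_{M-m}\,\nabla_{(m)}f(y_{M-m})$, which is exactly the new $k=m$ summand needed in the first sum, together with a remainder $\sum_{k=1}^{M-m-1}C_k(y_{k+m+1}-y_k)\,\nabla_{(m+1)}f(y_k)$ of precisely the shape demanded at order $m+1$.

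\emph{The main obstacle.} What remains — and this is where the real work lies — is to check that the partial sums $C_k$ coincide with the closed form required by the statement at order $m+1$, namely $C_k=\sum_{j=1}^k p_j(y_{k+m}-y_j)^{\{m\}}$ (so in particular $C_{M-m}=\sum_{j=1}^{M-m}p_j(y_M-y_j)^{\{m\}}$). After interchanging the order of summation, this reduces to the combinatorial telescoping identity
\[
\sum_{l=j}^{k}(y_{l+m-1}-y_j)^{\{m-1\}}(y_{l+m}-y_l)=(y_{k+m}-y_j)^{\{m\}}
\]
for each fixed $j$. I would prove it by an inner induction on $k$: the case $k=j$ is immediate, and in the step one factors out the common block $(y_{k+m}-y_j)^{\{m-1\}}$ and uses the elementary telescoping $(y_{k+1}-y_j)+(y_{k+m+1}-y_{k+1})=y_{k+m+1}-y_j$ to reassemble the product as $(y_{k+m+1}-y_j)^{\{m\}}$. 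Combining the three parts completes the induction and hence establishes (\ref{four}).
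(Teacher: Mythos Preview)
The paper does not actually prove this proposition: it is quoted as a known result from \cite{Asif-popoviciu}, with no argument given here. So there is no ``paper's own proof'' to compare against.

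Your inductive argument is correct. The base case is standard Abel summation, the $\nabla$-recurrence $\nabla_{(m)}f(y_{k+1})-\nabla_{(m)}f(y_k)=-(y_{k+m+1}-y_k)\nabla_{(m+1)}f(y_k)$ follows directly from the definition of divided differences, and the key combinatorial identity
\[
\sum_{l=j}^{k}(y_{l+m-1}-y_j)^{\{m-1\}}(y_{l+m}-y_l)=(y_{k+m}-y_j)^{\{m\}}
\]
telescopes exactly as you describe: factoring $(y_{k+m}-y_j)^{\{m\}}=(y_{k+m}-y_j)^{\{m-1\}}(y_{k+1}-y_j)$ and then combining $(y_{k+1}-y_j)+(y_{k+m+1}-y_{k+1})=y_{k+m+1}-y_j$ gives $(y_{k+m+1}-y_j)(y_{k+m}-y_j)^{\{m-1\}}=(y_{k+m+1}-y_j)^{\{m\}}$. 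Both the boundary term $C_{M-m}$ and the new remainder then match the order-$(m{+}1)$ form on the nose.
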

We can also prove an integral identity that is analogous to the above formula.
\begin{theorem}\label{thm1}
Let a function $f \in C^{(m+1)}$ and $P,f:I \to \mathbb{R}$, both be integrable functions, then
\begin{eqnarray}\nonumber
\int_a^b f(y)P(y) dy&=&\sum_{i=0}^{m}\left(\int_a^b P(y) \frac{(b-y)^i}{i!}dy\right) (-1)^i f^{(i)}(b)\\ \label{var-1}
&+& \int_a^b \left( \int_a^s P(y)\frac{(s-y)^m}{m!}dy\right)(-1)^{m+1}f^{(m+1)}(s) ds
\end{eqnarray}
\end{theorem}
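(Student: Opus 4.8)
The plan is to expand $f$ in a Taylor series about the right endpoint $b$ with the integral form of the remainder, substitute it into $\int_a^b f(y)P(y)\,dy$, and then reorganize the remainder by interchanging the order of integration. Concretely, for $f\in C^{(m+1)}$ Taylor's theorem gives
$$f(y)=\sum_{i=0}^m \frac{f^{(i)}(b)}{i!}(y-b)^i+\frac{1}{m!}\int_b^y (y-t)^m f^{(m+1)}(t)\,dt.$$
The first observation is that $(y-b)^i=(-1)^i(b-y)^i$, so that multiplying by $P(y)$ and integrating over $[a,b]$ turns the polynomial part of the expansion into exactly $\sum_{i=0}^m\left(\int_a^b P(y)\frac{(b-y)^i}{i!}\,dy\right)(-1)^i f^{(i)}(b)$, which is the first summand on the right-hand side of \eqref{var-1}. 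Thus the entire content of the theorem reduces to showing that the contribution of the remainder term equals the final double integral.

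For the remainder I would write
$$\int_a^b P(y)\left(\frac{1}{m!}\int_b^y (y-t)^m f^{(m+1)}(t)\,dt\right)dy,$$
and first flip the inner limits (which produces a sign via $\int_b^y=-\int_y^b$) and replace $(y-t)^m$ by $(-1)^m(t-y)^m$; together these yield an overall factor $(-1)^{m+1}$ and a double integral over the triangular region $\{(y,t):a\le y\le t\le b\}$. The key step is then to apply Fubini's theorem to swap the order of integration, rewriting the region as $\{(y,t):a\le t\le b,\ a\le y\le t\}$; this pulls $f^{(m+1)}(t)$ outside the inner integral and collects the coefficient $\int_a^t P(y)(t-y)^m\,dy$. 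Renaming the outer variable $t$ as $s$ then reproduces the term $\int_a^b\left(\int_a^s P(y)\frac{(s-y)^m}{m!}\,dy\right)(-1)^{m+1}f^{(m+1)}(s)\,ds$ verbatim.

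The main obstacle is the sign bookkeeping: one must carefully track the three sources of sign — the identity $(y-b)^i=(-1)^i(b-y)^i$ in the polynomial part, the reversal $\int_b^y=-\int_y^b$, and the identity $(y-t)^m=(-1)^m(t-y)^m$ in the remainder — and confirm that they combine into precisely the stated $(-1)^i$ and $(-1)^{m+1}$ factors. The Fubini interchange itself carries no convergence subtleties, since $f^{(m+1)}$ is continuous on the compact interval and $P$ is integrable, so the integrand is integrable on the triangle. An alternative route that avoids Taylor's theorem is to integrate $\int_a^b f(y)P(y)\,dy$ by parts $m+1$ times, each time differentiating $f$ and integrating the accumulated kernel, which generates the boundary terms at $b$ one derivative order at a time and leaves the same final double integral; this is essentially the integral analogue of the summation-by-parts computation behind Proposition \ref{lemma}, and either derivation yields \eqref{var-1}.
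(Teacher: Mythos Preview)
Your proposal is correct and follows essentially the same approach as the paper: Taylor expansion of $f$ about $b$ with integral remainder, the sign rewritings $(y-b)^i=(-1)^i(b-y)^i$ and $(y-s)^m=(-1)^m(s-y)^m$ together with $\int_b^y=-\int_y^b$, multiplication by $P$, integration over $[a,b]$, and a Fubini interchange on the triangular region. The paper's own proof is precisely this, in somewhat less explicit bookkeeping.
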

\begin{proof}
The function $f$ by using  Taylor expansion can be represented as
\begin{eqnarray*}
f(y)&=&\sum_{i=0}^{m} f^{(i)}(b) \frac{(y-b)^i}{i!}
+ \int_b^y f^{(m+1)}(s) \frac{(y-s)^m}{m!} ds \\
&=& \sum_{i=0}^{m} (-1)^i f^{(i)}(b) \frac{(b-y)^i}{i!}
+ \int_y^b (-1)^{m+1}f^{(m+1)}(s) \frac{(s-y)^m}{m!} ds.
\end{eqnarray*}
Multiplying the above equation by $P$ and integrate it over $[a,b]$, then
\begin{eqnarray*}
\int_a^b f(y)P(y) dy&=&
\sum_{i=0}^{m} (-1)^i f^{(i)}(b)\int_a^b P(y) \frac{(b-y)^i}{i!}dy
\\&+& \int_a^b \left(\int_y^b (-1)^{m+1}f^{(m+1)}(s)ds\right) P(y)\frac{(s-y)^m}{m!} dy \\
&=&\sum_{i=0}^{m}\left(\int_a^b P(y) \frac{(b-y)^i}{i!}dy\right) (-1)^i f^{(i)}(b)\\
&+&  \int_a^b\left( \int_a^s P(y)\frac{(s-y)^m}{m!}dy\right)(-1)^{m+1}f^{(m+1)}(s) ds.
\end{eqnarray*}
We used the Fubini theorem in above last equation for the variables $y$ and $s$.
\end{proof}
The following theorem is generalized form of result (see \cite[pp. 121--122]{pec}) and (see also \cite{Asif-popoviciu}).
\begin{theorem} \label{thm2}
Let supposition of the Theorem $\ref{thm1}$ be true, then following inequality holds
\begin{equation}\label{positive-1}
\int_a^b f(y)P(y) dy \geq 0
\end{equation}
 for all $(m+1)-\nabla-$convex function $f$, if and only if
\begin{eqnarray}\label{7}
\int_a^b P(y) \frac{(b-y)^i}{i!}dy=0,&\quad i\in \{1,\ldots,m\}\\
\label{8}
\int_a^s P(y)\frac{(s-y)^m}{m!}dy \ge0,&\quad \forall s\in[a,b].
\end{eqnarray}
\end{theorem}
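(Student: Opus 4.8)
The plan is to read everything off the identity \eqref{var-1} of Theorem \ref{thm1}, which expresses $\int_a^b fP\,dy$ as a finite boundary sum $\sum_{i=0}^{m}\bigl(\int_a^b P(y)\frac{(b-y)^i}{i!}dy\bigr)(-1)^i f^{(i)}(b)$ plus a remainder $\int_a^b K(s)\,(-1)^{m+1}f^{(m+1)}(s)\,ds$, where I abbreviate $K(s):=\int_a^s P(y)\frac{(s-y)^m}{m!}dy$. The theorem then becomes a question of deciding exactly when this expression is forced to be nonnegative for every $f$ satisfying $(-1)^{m+1}f^{(m+1)}\ge 0$.

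For sufficiency I would assume the moment conditions \eqref{7} (read as including $i=0$ as well, which is what is needed to kill the surviving term $\bigl(\int_a^b P\,dy\bigr)f(b)$) together with \eqref{8}. Then the entire boundary sum in \eqref{var-1} vanishes, leaving $\int_a^b fP\,dy=\int_a^b K(s)\,(-1)^{m+1}f^{(m+1)}(s)\,ds$. Since $f$ is $(m+1)$-$\nabla$-convex we have $(-1)^{m+1}f^{(m+1)}(s)\ge 0$ for all $s$, while \eqref{8} says $K(s)\ge 0$; the integrand is a product of nonnegative factors, so the integral is $\ge 0$, which is \eqref{positive-1}.

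For necessity I would substitute carefully chosen $(m+1)$-$\nabla$-convex test functions into \eqref{positive-1}. To force \eqref{7}, take $f(y)=\pm(y-b)^j/j!$ for each $j\in\{0,1,\dots,m\}$: these are polynomials of degree $\le m$, so $f^{(m+1)}\equiv 0$ kills the remainder, and $f^{(i)}(b)=\delta_{ij}$ collapses the boundary sum to the single term $(-1)^j\int_a^b P(y)\frac{(b-y)^j}{j!}dy$. Admissibility of both signs forces this moment to vanish. To force \eqref{8}, fix $s_0\in[a,b]$ and construct $f\in C^{(m+1)}$ with $f^{(i)}(b)=0$ for $i=0,\dots,m$ and $(-1)^{m+1}f^{(m+1)}=g$ for an arbitrary nonnegative continuous $g$ (obtained by integrating $g$ downward from $b$ exactly $m+1$ times). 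Such $f$ is $(m+1)$-$\nabla$-convex, the boundary sum again vanishes, and \eqref{positive-1} reduces to $\int_a^b K(s)g(s)\,ds\ge 0$; letting $g$ run over nonnegative bumps concentrating at $s_0$ and invoking continuity of $K$ yields $K(s_0)\ge 0$.

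The routine parts are the substitutions into \eqref{var-1} and the Kronecker-delta bookkeeping for the polynomial tests. The genuinely delicate point is the necessity of \eqref{8}: I must produce $(m+1)$-$\nabla$-convex functions whose top derivative realizes (or approximates) an arbitrary nonnegative profile while keeping all lower-order data at $b$ equal to zero, and then pass from ``$\int_a^b Kg\ge 0$ for all such $g$'' to the pointwise conclusion $K\ge 0$ through a standard approximate-identity argument. The technical checks I would be most careful about are that $K$ is continuous on $[a,b]$ (so the pointwise statement is legitimate) and that the constructed antiderivative genuinely lies in $C^{(m+1)}$.
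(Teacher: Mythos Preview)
Your sufficiency argument and your necessity argument for \eqref{7} coincide with the paper's (and you are right that $i=0$ must be included; the paper's statement writes $i\in\{1,\dots,m\}$ but its own proof uses $0\le i\le m$). For the necessity of \eqref{8}, however, the paper takes a shorter route than you do: it simply plugs the truncated power
\[
f_3(y)=\begin{cases}\dfrac{(s-y)^m}{m!},& y<s,\\[1ex] 0,& y\ge s,\end{cases}
\]
directly into \eqref{positive-1}, which yields $\int_a^s P(y)\frac{(s-y)^m}{m!}\,dy\ge 0$ in one stroke, with no approximate-identity passage. The trade-off is that $f_3\notin C^{(m+1)}$ (its $m$-th derivative jumps at $y=s$), so the identity \eqref{var-1} does not literally apply; the paper is tacitly using the divided-difference definition of $(m+1)$-$\nabla$-convexity, under which $f_3$ does qualify. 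Your construction---integrating an arbitrary nonnegative $g$ down from $b$ to stay inside $C^{(m+1)}$, then concentrating $g$ near $s_0$---is more scrupulous about this regularity point, at the price of the extra limiting argument you correctly flag as the delicate step.
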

\begin{proof}
If (\ref{7}) holds, then the first sum is zero in (\ref{var-1}) and required inequality (\ref{positive-1}) holds by applying (\ref{8}).

Conversely, if substitute the following functions in (\ref{positive-1}). Then
\begin{eqnarray*}
f_{1}(y)=\frac{(b-y)^{i} }{i!}\quad \text{and}\quad f_{2}=-f_{1}
 \end{eqnarray*}
for $0\leq i\leq m$ such that $(-1)^{m+1}f_{l}^{(m+1)}(y)\geq0,\quad l\in \{1,2\},$  then obtain required equality (\ref{7}) i.e.
$$\int _{a}^{b}P(y)\frac{(b-y)^{i} }{i!} dy=0,\quad 0\leq i\leq m.$$
We get the last inequality (\ref{8}) by assuming below function in (\ref{positive-1}), where $s\in[a,b]$
 $$
f_3(y)= \left\{ \begin{array}{rl}
\frac{(s-y)^{m} }{m!}, & y<s\\
0, & y\ge s.
\end{array} \right.
$$
\end{proof}
\begin{theorem} \label{thm2}
Let supposition of the Theorem $\ref{thm1}$ be true, then following inequality holds;
\begin{equation}\label{positive-11}
\int_a^b P(y)f(y) dy \geq 0
\end{equation}
 for every completely monotonic function $f$ of order $m+1$ if
\begin{eqnarray}\label{71}
\int_a^b P(y) \frac{(b-y)^i}{i!}dy=0,&\quad i\in \{1,\ldots,m\}\\
\label{81}
\int_a^s P(y)\frac{(s-y)^m}{m!}dy \ge0,&\quad \forall s\in[a,b].
\end{eqnarray}
\end{theorem}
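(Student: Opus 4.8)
The plan is to obtain this as an immediate consequence of the preceding theorem together with Remark~\ref{nable-CMF}, rather than reproving anything from scratch. The key observation is that the hypotheses (\ref{71})--(\ref{81}) are word-for-word identical to the conditions (\ref{7})--(\ref{8}) of the previous theorem; only the class of admissible functions has changed, from $(m+1)-\nabla-$convex functions to completely monotonic functions of order $m+1$. So the task reduces entirely to relating these two classes of functions.

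First I would record the relevant inclusion. If $f$ is completely monotonic of order $m+1$, then by definition $(-1)^i f^{(i)}(y)\ge 0$ for every $i\in\{0,1,\ldots,m+1\}$ and all $y$. Taking in particular $i=m+1$ gives $(-1)^{m+1}f^{(m+1)}\ge 0$, which is exactly the differentiable characterization of an $(m+1)-\nabla-$convex function. Hence, as already noted in Remark~\ref{nable-CMF}, every completely monotonic function of order $m+1$ is $(m+1)-\nabla-$convex; that is, the former class sits inside the latter.

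With this inclusion in hand the argument is essentially one line: assuming (\ref{71})--(\ref{81}), the sufficiency (the ``if'') direction of the previous theorem guarantees $\int_a^b P(y)f(y)\,dy\ge 0$ for \emph{every} $(m+1)-\nabla-$convex $f$, and in particular for every completely monotonic $f$ of order $m+1$, which is the assertion (\ref{positive-11}). Alternatively one may argue directly from the identity (\ref{var-1}): the conditions (\ref{71}) annihilate the sum terms with $i\in\{1,\ldots,m\}$, while the remainder integral is nonnegative because $(-1)^{m+1}f^{(m+1)}(s)\ge 0$ and the inner integral is nonnegative by (\ref{81}). Here complete monotonicity in fact makes the surviving boundary contributions $(-1)^i f^{(i)}(b)$ nonnegative as well, so the finer sign information is available should one wish to weaken the equalities to one-sided inequalities.

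The point worth flagging is \emph{why this statement is only an ``if'' and not an ``if and only if,''} in contrast with the previous theorem, and this is where I expect the only genuine subtlety to lie. The necessity half of that theorem was proved by feeding in the test functions $f_1(y)=(b-y)^i/i!$, its negative $f_2=-f_1$, and the truncated power $f_3$. The negation step is the obstruction: while $f_1$ is itself completely monotonic on $[a,b]$, its negative $f_2$ is not, since complete monotonicity is destroyed by multiplication by $-1$. Consequently one can no longer extract the two-sided equalities (\ref{7}) from the positivity of the functional, and the converse simply fails to transfer to this smaller, sign-rigid class. Thus the essential thing to get right is this asymmetry of the function class under negation; everything else is inherited directly from the earlier result.
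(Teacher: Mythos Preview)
Your proposal is correct and covers the paper's argument. The paper's own proof is the one-line direct appeal to the identity (\ref{var-1}): condition (\ref{71}) kills the boundary sum and (\ref{81}) together with $(-1)^{m+1}f^{(m+1)}\ge 0$ makes the remaining integral nonnegative---exactly what you give as your ``alternative'' route. Your primary route, deducing the result from the preceding theorem via the inclusion in Remark~\ref{nable-CMF}, is a mild repackaging of the same idea and is equally valid; your additional discussion of why the converse fails (the class of completely monotonic functions is not closed under negation, so the test functions $f_2=-f_1$ are unavailable) is correct commentary that the paper does not spell out.
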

\begin{proof}
If (\ref{71}) holds, then the first sum is zero in (\ref{var-1}) and the required inequality (\ref{positive-11}) holds by using (\ref{81}).
\end{proof}

\begin{remark}
 It is a known fact that every $\nabla-$convex function is a subclass of completely monotonic function and this is a subclass of exponentially convex function and hence the previous result also holds for exponentially convex functions \cite{Method}. Moreover, every completely monotonic function is log-convex so the stated result also holds for every log-convex function \cite{[CMF]}.
\end{remark}

\section{\bf{Discrete Case for Functions of Two Variables}}
Under the given topic, we would consider a two variables discrete function that are defined on $I_{1}\times J_{1}\subset\mathbb{R}\times\mathbb{R}$. Firstly, we will get an identity for $\sum_{i=1}^M\sum_{j=1}^N p_{ij}f(y_i,z_j)$ in which involves divided differences and consider necessary and sufficient conditions of next theorem for inequality of Popoviciu type characterisation of positivity of sums for higher order discrete functions $\sum_{i=1}^M\sum_{j=1}^N p_{ij}f(y_i,z_j) \geq 0$ holds, for every $(m,n)-\nabla-$convex function.
\begin{theorem}\label{discrete-id}
Let $p_{ij}\in \mathbb{R}$ and $f:I_{1}\times J_{1}\to \mathbb{R}$ be discrete function, where $i\in\{1,2,3,\ldots,M-1,M\}$ and $ j\in\{1,2,3,\cdots,N-1,N\}$, then following identity holds:\begin{align} \label{double1}
&\sum_{i=1}^M\sum_{j=1}^N p_{ij}f(y_i,z_j)\\ \nonumber
&=\sum_{k=0}^{n-1} \sum_{t=0}^{m-1}   \sum_{s=1}^{M-t} \sum_{r=1}^{N-k} p_{sr} (z_N-z_r)^{\{k\}}
(y_M-y_{s})^{\{t\}}   \nabla_{(t,k)} f(y_{M-t},z_{N-k})\\ \nonumber
&+
\sum_{k=0}^{n-1} \sum_{t=1}^{M-m} \sum_{s=1}^{t} \sum_{r=1}^{N-k} p_{sr} (z_N-z_r)^{\{k\}} (y_{t+m-1}-y_s)^{\{m-1\}}   \nabla_{(m,k)} f(y_{t}, z_{N-k})(y_{t+m}-y_{t})\\ \nonumber
&+ \sum_{k=1}^{N-n} \sum_{t=0}^{m-1}\sum_{s=1}^{M-t} \sum_{r=1 }^k p_{sr} (z_{k+n-1}-z_r)^{\{n-1\}} (y_M-y_{s})^{\{t\}} \nabla_{(t,n)} f(y_{M-t}, z_{k}) (z_{k+n}-z_k)\\ \nonumber
&+\sum_{k=1}^{N-n} \sum_{t=1}^{M-m} \sum_{s=1}^{t} \sum_{r=1 }^k p_{sr} (z_{k+n-1}-z_r)^{\{n-1\}} (y_{t+m-1}-y_s)^{\{m-1\}}\nabla_{(m,n)} f(y_{t}, z_{k}) \times \\ \nonumber
& \times(y_{t+m}-y_{t})(z_{k+n}-z_k).
 \end{align}
where  $(y_i,z_j)\in I_{1}\times J_{1}$  are distinct points.
\end{theorem}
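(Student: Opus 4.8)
The plan is to obtain the two-variable identity \eqref{double1} by iterating the one-variable identity \eqref{four} once in each variable. First I would fix the column index $j$ and regard the inner sum $\sum_{i=1}^M p_{ij} f(y_i,z_j)$ as a one-variable expression in $y$, with weights $p_{ij}$ and function $f(\cdot,z_j)$. Applying Proposition \ref{lemma} with order $m$ gives, for each $j$,
\begin{align*}
\sum_{i=1}^M p_{ij} f(y_i,z_j) &= \sum_{t=0}^{m-1}\Bigl(\sum_{s=1}^{M-t} p_{sj}(y_M-y_s)^{\{t\}}\Bigr)\nabla_{(t)}^{y} f(y_{M-t},z_j)\\
&\quad + \sum_{t=1}^{M-m}\Bigl(\sum_{s=1}^{t} p_{sj}(y_{t+m-1}-y_s)^{\{m-1\}}\Bigr)\nabla_{(m)}^{y} f(y_t,z_j)\,(y_{t+m}-y_t),
\end{align*}
where $\nabla_{(\cdot)}^{y}$ denotes the operator $\nabla_{(\cdot)}$ acting in the $y$-slot only. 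Summing over $j$ from $1$ to $N$ splits the double sum into a $y$-low-order piece ($0\le t\le m-1$) and a $y$-order-$m$ piece.

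Next I would apply Proposition \ref{lemma} a second time, now in the $z$-variable, to each of these two pieces. In the low-order piece, for each fixed $t$ the quantity $\nabla_{(t)}^{y} f(y_{M-t},z_j)$ is a function of $z_j$ alone, carried by the weight $w_r^{(t)} := \sum_{s=1}^{M-t} p_{sr}(y_M-y_s)^{\{t\}}$; expanding $\sum_{j} w_j^{(t)}\,\nabla_{(t)}^{y} f(y_{M-t},z_j)$ by \eqref{four} with order $n$ produces a $z$-low-order term and a $z$-order-$n$ term, which after summing over $t$ yield the first and third summands of \eqref{double1} respectively. The full-order piece is handled identically, with the $j$-independent factor $(y_{t+m}-y_t)$ pulled outside and the weight $\tilde w_r^{(t)} := \sum_{s=1}^{t} p_{sr}(y_{t+m-1}-y_s)^{\{m-1\}}$; its two $z$-parts furnish the second and fourth summands. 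The $2\times2$ block structure of the four terms is thus exactly the tensor of the two-term structure of \eqref{four} with itself.

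The one genuinely non-formal point is the identification of the iterated operators with the mixed operator, namely
\[
\nabla_{(k)}^{z}\bigl(\nabla_{(t)}^{y} f\bigr)(y_{M-t},z_{N-k}) = \nabla_{(t,k)} f(y_{M-t},z_{N-k}),
\]
and similarly for the other three combinations. This follows from the definition $\Delta_{(m,n)} f=[y_i,\ldots,y_{i+m};[z_j,\ldots,z_{j+n};f]]$ together with the sign rule $\nabla_{(t,k)}=(-1)^{t+k}\Delta_{(t,k)}$: writing each single-variable divided difference as a linear combination of function values with coefficients depending only on its own nodes, the $y$- and $z$-differences commute because the two node sets are independent, so the order of application is immaterial and the signs $(-1)^{t}(-1)^{k}$ collect into $(-1)^{t+k}$. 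I expect the main bookkeeping obstacle to be tracking the nested weight sums through the second application, in particular ensuring that the inner index $s$ (from the first step) and the index $r$ (from the second step) combine into the stated double sums $\sum_{s}\sum_{r}$ rather than interfering; once the operator identification above is in hand, the remainder is the routine substitution of the explicit weights $w_r^{(t)}$ and $\tilde w_r^{(t)}$ and the collection of the four pieces.
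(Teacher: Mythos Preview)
Your proposal is correct and follows essentially the same approach as the paper: iterate the one-variable identity \eqref{four} once in each variable and collect the resulting $2\times 2$ block of terms. The only difference is the order of iteration---the paper expands in $z$ first and then in $y$, whereas you do $y$ first and then $z$---but this is immaterial precisely because of the commutation of the divided-difference operators that you spell out; in fact your proposal makes that point more explicit than the paper does.
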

\begin{proof}
 We have
\[ \sum_{i=1}^M\sum_{j=1}^N p_{ij}f(y_i,z_j)=\sum_{i=1}^M\left( \sum_{j=1}^N q_j G_i(z_j) \right), \]
where $p_{ij}=q_j$ and $G_i : z \mapsto f(y_i,z)$. Using (\ref{four}) in the inner sum we get
\begin{align*}
& \sum_{i=1}^M\sum_{j=1}^N p_{ij}f(y_i,z_j)=\sum_{i=1}^M \sum_{k=0}^{n-1} \left( \sum_{j=1}^{N-k} q_j (z_N-z_j)^{\{k\}}\right) \nabla_{(k)} G_i(z_{N-k})\\
& +
\sum_{i=1}^M \sum_{k=1}^{N-n} \left(\sum_{j=1}^k q_j (z_{k+n-1}-z_j)^{\{n-1\}}\right) \nabla_{(n)} G_i(z_{k})(z_{k+n}-z_k)
 \\& =
\sum_{k=0}^{n-1} \left( \sum_{i=1}^M  \left( \sum_{j=1}^{N-k} q_j (z_N-z_j)^{\{k\}}\right) \nabla_{(k)} G_i(z_{N-k})\right)\\
&+ \sum_{k=1}^{N-n} \left( \sum_{i=1}^M  \left( \sum_{j=1 }^k q_j (z_{k+n-1}-z_j)^{\{n-1\}}  \right) \nabla_{(n)} G_i(z_{k})(z_{k+n}-z_k) \right)
  \\
&=\sum_{k=0}^{n-1} \left( \sum_{i=1}^M w_i F(y_i) \right) + \sum_{k=1}^{N-n} \left( \sum_{i=1}^M v_i H(y_i) \right)
 \end{align*}
where $w_i= \sum_{j=1}^{N-k} q_j (z_N-z_j)^{\{k\}}= \sum_{j=1}^{N-k} p_{ij} (z_N-z_j)^{\{k\}}$,\quad$v_i=\sum_{j=1 }^k q_j (z_{k+n-1}-z_j)^{\{n-1\}}$,\\
$F(y_i)=  \nabla_{(k)} G_i(z_{N-k})$, and $H(y_i)= \nabla_{(n)} G_i(z_{k})(z_{k+n}-z_k)$.

Using again (\ref{four}) on  inner sums, then we have
\begin{align*}
&\sum_{i=1}^M\sum_{j=1}^N p_{ij}f(y_i,z_j)\\
&= \sum_{k=0}^{n-1} \sum_{r=0}^{m-1} \left( \sum_{i=1}^{M-r} w_i(y_M-y_{i})^{\{r\}} \right) \nabla_{(r)} F(y_{M-r})\\
& +
\sum_{k=0}^{n-1} \sum_{r=1}^{M-m} \left( \sum_{i=1}^{r} w_i(y_{r+m-1}-y_i)^{\{m-1\}} \right) \nabla_{(m)}F(y_{r})(y_{r+m}-y_{r})\\
&+  \sum_{k=1}^{N-n} \sum_{t=0}^{m-1} \left(\sum_{i=1}^{M-t} v_i (y_M-y_{i})^{\{t\}} \right) \nabla_{(t)} H(y_{M-t})
\\
&+\sum_{k=1}^{N-n} \sum_{t=1}^{M-m} \left( \sum_{i=1}^{t} v_i (y_{t+m-1}-y_i)^{\{m-1\}} \right) \nabla_{(m)} H(y_{t})(y_{t+m}-y_{t}) \end{align*}
\begin{align*}
&=\sum_{k=0}^{n-1} \sum_{r=0}^{m-1}   \sum_{i=1}^{M-r} \sum_{j=1}^{N-k} p_{ij} (z_N-z_j)^{\{k\}}
(y_M-y_{i})^{\{r\}}   \nabla_{(r,k)} f(y_{M-r},z_{N-k}) \\
&+
\sum_{k=0}^{n-1} \sum_{r=1}^{M-m}  \sum_{i=1}^{r} \sum_{j=1}^{N-k} p_{ij} (z_N-z_j)^{\{k\}} (y_{r+m-1}-y_i)^{\{m-1\}}   \nabla_{(m,k)} f(y_{r}, z_{N-k})(y_{r+m}-y_{r})\\
&+ \sum_{k=1}^{N-n} \sum_{t=0}^{m-1} \sum_{i=1}^{M-t} \sum_{j=1 }^k p_{ij} (z_{k+n-1}-z_j)^{\{n-1\}} (y_M-y_{i})^{\{t\}} \nabla_{(t,n)} f(y_{M-t}, z_{k}) (z_{k+n}-z_k)
\\
&+\sum_{k=1}^{N-n} \sum_{t=1}^{M-m} \sum_{i=1}^{t} \sum_{j=1 }^k p_{ij} (z_{k+n-1}-z_j)^{\{n-1\}} (y_{t+m-1}-y_i)^{\{m-1\}}\nabla_{(m,n)} f(y_{t}, z_{k}) \times \\
& \times(y_{t+m}-y_{t})(z_{k+n}-z_k).
 \end{align*}
If change $i \rightarrow s$, $j\rightarrow r$ in all sums and put $r \rightarrow t$ in first and second sums, then obtain the required identity(\ref{double1}).
\end{proof}
\begin{remark}\label{rk1}
If we simply put $f(y_i,z_j)=f(y_i)g(z_i)$ in Theorem \ref{discrete-id}, then we obtain similar result for two $f$ and $g$ functions as follows.
\end{remark}
\begin{corollary}\label{cor1}
Let $p_{ij}\in \mathbb{R}$, $f:I_{1} \to \mathbb{R}$ and $g:J_{1} \to \mathbb{R}$ be two discrete functions, where $i\in\{1,2,3,\ldots,M-1,M\}$ and $ j\in\{1,2,3,\cdots,N-1,N\}$, then following identity holds:
\begin{align*}\nonumber
&\sum_{i=1}^M\sum_{j=1}^N p_{ij}f(y_i)g(z_j)\\ \nonumber
%\end{equation}
%\begin{align*}
&=\sum_{k=0}^{n-1} \sum_{t=0}^{m-1}   \sum_{s=1}^{M-t} \sum_{r=1}^{N-k} p_{sr} (y_M-y_{s})^{\{t\}}   \nabla_{(t)} f(y_{M-t}) (z_N-z_r)^{\{k\}}\nabla_{(k)} g(z_{N-k}) \\ \nonumber
&+
\sum_{k=0}^{n-1} \sum_{t=1}^{M-m} \sum_{s=1}^{t} \sum_{r=1}^{N-k} p_{sr} (z_N-z_r)^{\{k\}}\nabla_{(k)} g( z_{N-k}) (y_{t+m-1}-y_s)^{\{m-1\}}   \nabla_{(m)} f(y_{t})(y_{t+m}-y_{t})\\ \nonumber
&+ \sum_{k=1}^{N-n} \sum_{t=0}^{m-1}\sum_{s=1}^{M-t} \sum_{r=1 }^k p_{sr} (z_{k+n-1}-z_r)^{\{n-1\}} \nabla_{(n)} g(z_{k}) (z_{k+n}-z_k)(y_M-y_s)^{\{t\}} \nabla_{(t)} f(y_{M-t})
\\ \nonumber
&+\sum_{k=1}^{N-n} \sum_{t=1}^{M-m} \sum_{s=1}^{t} \sum_{r=1 }^k p_{sr} (z_{k+n-1}-z_r)^{\{n-1\}}\nabla_{(n)} g( z_{k})(z_{k+n}-z_k) \times \\ \nonumber & \times(y_{t+m-1}-y_s)^{\{m-1\}}\nabla_{(m)} f(y_{t})
(y_{t+m}-y_{t}).
\end{align*}
where $(y_i,z_j)\in I_{1}\times J_{1}$ are distinct points.
\end{corollary}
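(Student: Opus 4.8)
The plan is to obtain the corollary as a direct specialization of Theorem \ref{discrete-id}, exactly as anticipated in Remark \ref{rk1}: substitute the separable function $f(y_i,z_j)=f(y_i)g(z_j)$ into the identity \eqref{double1} and exploit the fact that a two-variable divided difference of a product splits into a product of one-variable divided differences. With that factorization in hand, the four sums on the right-hand side of \eqref{double1} transform term by term into the four sums of the corollary, after a purely cosmetic regrouping of factors.

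First I would record the multiplicativity of the mixed operator on separable functions. Since the inner divided difference in $z$ is linear and treats $f(y)$ as a constant, and the outer divided difference in $y$ likewise treats $[z_j,\ldots,z_{j+n};g]$ as a constant, the definition of $\Delta_{(m,n)}$ gives
\[
\Delta_{(m,n)}[f(\cdot)g(\cdot)](y_i,z_j)=\bigl[y_i,\ldots,y_{i+m};[z_j,\ldots,z_{j+n};f(\cdot)g(\cdot)]\bigr]=\Delta_{(m)}f(y_i)\,\Delta_{(n)}g(z_j).
\]
Multiplying by $(-1)^{m+n}=(-1)^m(-1)^n$ and using the definitions of $\nabla_{(m,n)}$, $\nabla_{(m)}$, $\nabla_{(n)}$ yields the $\nabla$-version
\[
\nabla_{(t,k)}[f(\cdot)g(\cdot)](y_{M-t},z_{N-k})=\nabla_{(t)}f(y_{M-t})\,\nabla_{(k)}g(z_{N-k}),
\]
and the analogous factorizations for the index pairs $(m,k)$, $(t,n)$ and $(m,n)$ that occur in the remaining three sums of \eqref{double1}.

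Next I would insert $f(y_i,z_j)=f(y_i)g(z_j)$ on the left-hand side and the factorization above into each of the four sums on the right-hand side of \eqref{double1}. The weight factors $(z_N-z_r)^{\{k\}}$, $(y_M-y_s)^{\{t\}}$, $(z_{k+n-1}-z_r)^{\{n-1\}}$, $(y_{t+m-1}-y_s)^{\{m-1\}}$ and the step factors $(y_{t+m}-y_t)$, $(z_{k+n}-z_k)$ are untouched by the substitution, so each summand passes through verbatim with the single mixed difference $\nabla_{(\cdot,\cdot)}f$ replaced by the corresponding product $\nabla_{(\cdot)}f\cdot\nabla_{(\cdot)}g$. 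A final regrouping inside each summand—collecting the $y$-weight with $\nabla_{(t)}f$ and the $z$-weight with $\nabla_{(k)}g$ (respectively $\nabla_{(n)}g$)—puts the four terms into the exact order displayed in the statement.

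I do not expect a genuine obstacle here: once the multiplicativity of the divided difference on separable functions is established, the remainder is a direct substitution into a fixed identity. The only care required is the bookkeeping of matching each weight and each step factor to the correct one-variable divided difference across the four sums, which is routine.
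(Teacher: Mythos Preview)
Your proposal is correct and follows exactly the route indicated by the paper in Remark~\ref{rk1}: substitute $f(y_i,z_j)=f(y_i)g(z_j)$ into Theorem~\ref{discrete-id} and use the multiplicative splitting of $\nabla_{(t,k)}$ on separable functions. The paper itself gives no further argument beyond that remark, so your explicit verification of the factorization is, if anything, more complete than what the paper provides.
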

\begin{remark}\label{rk2}
We can easily obtain similar result for sequences by simply considering the function $a_{ij}=f(i,j)$ in previous theorem. Moreover, we can also split this sequence into two sequences as special case by using $a_{ij}=a_ib_j$ instead of $f(y_i,z_j)=f(y_i)g(z_j)$ is previous corollary.
\end{remark}
\begin{theorem}\label{thm3}
Let $p_{ij}\in \mathbb{R}$ and $f:I_{1}\times J_{1}\to \mathbb{R}$ be discrete function, where $i\in \{1,2,3,\ldots,M-1,M\}$, $j\in \{1,2,3,\cdots,N-1 ,N\}$ and $I_{1}=\{y_{M-r},y_{M-r+1},\ldots,y_{M}\}$, $J_{1}=\{z_{N-k},z_{N-k+1},\ldots,z_{N}\}$ and $y_{M-r} < \cdots <y_M$, $z_{N-k}<\cdots <z_N$, then the below inequality holds for all $(m,n)-\nabla-$convex function $f$.
\begin{equation}\label{9}
 \sum_{i=1}^M\sum_{j=1}^N p_{ij}f(y_i,z_j) \geq 0
 \end{equation}
if and only if
\begin{eqnarray}\label{10}
\sum_{s=1}^{M-t} \sum_{r=1}^{N-k} p_{sr} (z_N-z_r)^{\{k\}}(y_M-y_{s})^{\{t\}} &=&0,
\;\;\;\begin{array}{ll}
k\in \{0,1,2,\cdots ,n-1\}\\ t\in \{0,1,2,\cdots ,m-1\}
\end{array}\\ \label{11}
\sum_{s=1}^{t} \sum_{r=1 }^{N-k} p_{sr} (z_N-z_r)^{\{k\}} (y_{t+m-1}-y_s)^{\{m-1\}} &=&0,
\;\;\;\begin{array}{ll}
k\in\{0,1,2,\cdots ,n-1\}\\ t\in\{1,2,3,\cdots ,M-m\}
\end{array}\\ \label{12}
\sum_{s=1}^{M-t} \sum_{r=1 }^k p_{sr} (z_{k+n-1}-z_r)^{\{n-1\}} (y_M-y_s)^{\{t\}} &=&0,
\;\;\;\begin{array}{ll}
k\in\{1,2,3,\cdots ,N-n\}\\ t\in\{0,1,2,\cdots ,m-1\}
\end{array}\\ \label{13}
\sum_{s=1}^t \sum_{r=1}^k p_{sr}(y_{t+m-1}-y_{s})^{\{m-1\}}(z_{k+n-1}-z_{r})^{\{n-1\}} &\geq&0,
\;\;\;\begin{array}{ll}
k\in\{1,2,3,\cdots ,N-n\}\\ t\in\{1,2,3,\cdots ,M-m\}.
\end{array}
\end{eqnarray}
\end{theorem}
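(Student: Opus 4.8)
The plan is to extract the whole statement from the master identity (\ref{double1}) of Theorem~\ref{discrete-id}. That identity writes $\sum_{i=1}^M\sum_{j=1}^N p_{ij}f(y_i,z_j)$ as four groups of terms whose coefficients are exactly the left-hand sides of (\ref{10})--(\ref{13}): the first group carries the mixed low-order differences $\nabla_{(t,k)}f(y_{M-t},z_{N-k})$ with $t<m$, $k<n$; the second and third carry $\nabla_{(m,k)}f$ and $\nabla_{(t,n)}f$, of full order in exactly one variable; and the fourth carries the top difference $\nabla_{(m,n)}f(y_t,z_k)$ multiplied by the factors $(y_{t+m}-y_t)$ and $(z_{k+n}-z_k)$. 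The ordering hypotheses $y_{M-r}<\cdots<y_M$ and $z_{N-k}<\cdots<z_N$ make these factors strictly positive, and $(m,n)-\nabla-$convexity says precisely that every $\nabla_{(m,n)}f(y_t,z_k)\ge0$.

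For sufficiency I would substitute (\ref{10}), (\ref{11}) and (\ref{12}) into (\ref{double1}); they kill the first three groups and leave only the fourth. Each of its summands is a product of the coefficient $\sum_{s=1}^t\sum_{r=1}^k p_{sr}(y_{t+m-1}-y_s)^{\{m-1\}}(z_{k+n-1}-z_r)^{\{n-1\}}$, nonnegative by (\ref{13}), the nonnegative difference $\nabla_{(m,n)}f(y_t,z_k)$, and the positive length factors; hence the sum is $\ge0$, which is (\ref{9}).

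For necessity I would recover each coefficient by feeding (\ref{9}) suitable test functions, mimicking the one-variable device (the polynomial $f_1=\tfrac{(b-y)^i}{i!}$ and the truncated power $f_3$ used there to produce the equalities and the final inequality). To obtain the three equalities I would take tensor products $f(y,z)=\varphi(y)\chi(z)$, so that by Corollary~\ref{cor1} every mixed difference factors as $\nabla_{(t)}\varphi(y_{M-t})\cdot\nabla_{(k)}\chi(z_{N-k})$. Choosing $\varphi$ and $\chi$ from the one-variable Newton bases and truncated powers anchored at $y_M,y_{M-1},\dots$ and $z_N,z_{N-1},\dots$, and keeping at least one factor below its critical degree so that $\nabla_{(m,n)}f\equiv0$, I can arrange that exactly one boundary coefficient survives while all others (and the top group) vanish. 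Since $\nabla_{(m,n)}f\equiv0$, both $f$ and $-f$ are $(m,n)-\nabla-$convex, so applying (\ref{9}) with each sign forces the surviving coefficient to zero; ranging over all admissible $(t,k)$ gives (\ref{10}), (\ref{11}) and (\ref{12}). For the last inequality I would instead take a doubly concentrated test function, $\varphi$ and $\chi$ both discrete truncated powers, so that $\nabla_{(m,n)}f$ is a single nonnegative Kronecker spike at a node $(y_{t_0},z_{k_0})$ and every boundary difference vanishes; then only one summand of the fourth group survives and (\ref{9}) reduces to (\ref{13}) at $(t_0,k_0)$.

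The step I expect to be the main obstacle is the explicit construction and verification of these discrete test functions: producing, on the finite grid, a function whose prescribed divided differences form a single spike while every competing difference (including the top one, when it must vanish) is identically zero. This is the discrete divided-difference counterpart of $\tfrac{(b-y)^i}{i!}$ and of the truncated power from the one-variable proof, and the bookkeeping of which of the four groups each test function activates is the delicate part; once the test functions are in hand, the rest is substitution into (\ref{double1}).
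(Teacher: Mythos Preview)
Your proposal is correct and follows the paper's own proof essentially step for step: sufficiency is read off directly from identity~(\ref{double1}), and necessity is obtained by plugging in tensor-product test functions built from the Newton polynomials $(y_M-y_s)^{\{t\}}$, $(z_N-z_r)^{\{k\}}$ and their truncated variants $(y_{t+m-1}-y_s)^{\{m-1\}}\mathbf{1}_{\{s<t\}}$, $(z_{k+n-1}-z_r)^{\{n-1\}}\mathbf{1}_{\{r<k\}}$, exactly as you describe. The paper simply writes these test functions down (as $f_1,\dots,f_7$) and asserts the required vanishing/sign properties rather than carrying out the bookkeeping you flag as the obstacle, so what you anticipate as the delicate step is in fact left implicit in the published argument.
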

\begin{proof}
If (\ref{10}), (\ref{11}) and (\ref{12}) hold, then first, second and third sums are zero in (\ref{double1}), then by using (\ref{13}) we obtain the required inequality (\ref{9}).

Conversely, if substitute the following functions in (\ref{9}). Then we obtain the required equality (\ref{10})
\begin{eqnarray*}
f_{1}(y_{s},z_{r})=(z_N-z_r)^{\{k\}}(y_M-y_{s})^{\{t\}}\quad \text{and} \quad f_{2}=-f_{1}
 \end{eqnarray*}
 for $0\leq t\leq m-1$ and $0\leq k\leq n-1$ such that $\nabla_{(m,n)} f_{l}\geq 0,\quad l\in \{1,2\}$    $$\sum_{s=1}^{M-t} \sum_{r=1}^{N-k} p_{sr} (z_N-z_r)^{\{k\}}(y_M-y_{s})^{\{t\}}=0,\quad 0\leq t\leq m-1,\;\;\;0\leq k\leq n-1.$$
 In the similar manner, if take the following functions in (\ref{9})  for $0\leq k\leq n-1$ and $1\leq t\leq M-m$
  $$
f_{3}(y_{s},z_{r})= \left\{ \begin{array}{rl}
(z_N-z_r)^{\{k\}} (y_{t+m-1}-y_s)^{\{m-1\}}, & s<t \\
0, & s\geq t
\end{array} \right.
$$
 $$
f_{4}=-f_{3}
$$
such that $\nabla_{(m,n)} f_{l}\geq 0,\quad l\in \{3,4\}$, we obtain the equality (\ref{11}) i.e. $$\sum_{s=1}^{t} \sum_{r=1 }^k p_{sr} (z_N-z_r)^{\{k\}} (y_{t+m-1}-y_s)^{\{m-1\}}=0,\quad 0\leq k\leq n-1,\quad 1\le t \le M-m.$$
Similarly, if take the following functions in (\ref{9}) for $1\leq k\leq N-n $ and $0\le t\le m-1$
  $$
f_{5}(y_{s},z_{r})= \left\{ \begin{array}{rl}
(z_{k+n-1}-z_r)^{\{n-1\}} (y_M-y_s)^{\{t\}}, & r<k \\
0, & r\geq k
\end{array} \right.
$$
 $$
f_{6}=-f_{5}
$$
such that $\nabla_{(m,n)} f_l\geq 0,\quad l\in \{5,6\}$, we obtain the equality (\ref{12}) as above, i.e. $$\sum_{s=1}^{M-t} \sum_{r=1 }^k p_{sr} (z_{k+n-1}-z_r)^{\{n-1\}} (y_M-y_s)^{\{t\}}=0,\;\;\;1\leq k\leq N-n,\quad 0\le t\le m-1.$$
We get the last inequality (\ref{13}) by considering the following function in (\ref{9}) for $1\le k\le N-n$ and $1\le t\le M-m$
 $$
f_7(y_{s},z_{r})= \left\{ \begin{array}{rl}
(y_{t+m-1}-y_{s})^{\{m-1\}}(z_{k+n-1}-z_{r})^{\{n-1\}}, & s<t,\quad\;\; \quad r<k \\
0, & s\geq t\quad \text{or}\quad r\geq k.
\end{array} \right.
$$
\end{proof}
\begin{remark}
Similar remarks as given in Remarks \ref{rk1} and \ref{rk2} also hold for this result. Hence we can state a Corollary similar to Corollary \ref{cor1} for previous main theorem as well.
\end{remark}

\section{\bf{Integral Case for Functions of Two Variables}}
Under continuing topic, we can suppose function in $x$ and $y$ variables which is defined on the interval $I\times J=[a,b]\times[c,d]$. Moreover, $m,n,M,N\in \mathbb{N}\cup \{ 0\}$ throughout the section, and usable notations are: $$ f_{(0,0)}=f \quad f_{(1,0)}=\frac{\partial f}{\partial y}, \quad f_{(0,1)}=\frac{\partial f}{\partial z}, \quad f_{(1,1)}=\frac{\partial^2 f}{\partial y \partial z}=\frac{\partial^2 f}{\partial z \partial y},\quad f_{(i,j)}=\frac{\partial^{i+j} f}{\partial y^{i} \partial z^{j}}=\frac{\partial^{i+j} f}{\partial z^{j} \partial y^{i}}$$
Now we recall a result from \cite{Asif-popoviciu} which would be helpful to prove our next main result:
\begin{lemma}\label{thm4.2}
Let $f$ has continuous partial derivatives $f_{(i,j)}$ and $P,f:I\times J\rightarrow\mathbb{R}$ be both integrable functions, where $i\in\{0,1,2,\cdots,M,M+1\}$ and $j\in\{0,1,2,\cdots,N,N+1\}$, then
 \begin{eqnarray}
\int_a^b \int_c^d P(y,z)f(y,z) dz dy &=& \sum_{i=0}^{M} \sum_{j=0}^{N}  \int_a^b \int_c^d P(s,t)  \frac{(s-a)^i}{i!}   \frac{(t-c)^j}{j!} f_{(i,j)}(a,c) dt ds \\ \nonumber
&+&\sum_{j=0}^{N}\int_a^b  \int_y^b  \int_c^d P(s,t)   \frac{(s-y)^M}{M!}\frac{(t-c)^j}{j!} f_{(M+1,j)}(y,c) dt  ds  dy \\ \nonumber
&+&\sum_{i=0}^{M} \int_c^d  \int_a^b  \int_z^d P(s,t)   \frac{(s-a)^i}{i!} \frac{(t-z)^N}{N!} f_{(i,N+1)}(a,z) dt ds  dz\\ \nonumber
&+&\int_a^b  \int_c^d \int_y^b    \int_z^d P(s,t)  \frac{(s-y)^M}{M!} \frac{(t-z)^N}{N!} f_{(M+1,N+1)}(y,z) dt  ds dz  dy.
\end{eqnarray}
\end{lemma}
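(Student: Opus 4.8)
The plan is to derive this identity exactly as in the proof of Theorem~\ref{thm1}, but iterating the one-variable Taylor expansion once in each variable and then interchanging the order of integration. The only difference from the one-variable situation is that the expansion is anchored at the lower corner $(a,c)$ rather than at an endpoint; the mechanism is otherwise identical. First I would expand $f$ in $y$ about $y=a$ (treating $z$ as a parameter) by the Taylor formula with integral remainder of order $M$,
\[
f(y,z)=\sum_{i=0}^{M}f_{(i,0)}(a,z)\frac{(y-a)^i}{i!}+\int_a^y f_{(M+1,0)}(s,z)\frac{(y-s)^M}{M!}\,ds,
\]
and then expand each $z$-dependent coefficient --- namely $f_{(i,0)}(a,z)$ for $0\le i\le M$ and $f_{(M+1,0)}(s,z)$ --- in $z$ about $z=c$, again with integral remainder of order $N$. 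Substituting back produces four groups of terms: a pure polynomial part carrying the constants $f_{(i,j)}(a,c)$, two mixed parts carrying $f_{(M+1,j)}(s,c)$ and $f_{(i,N+1)}(a,t)$, and a doubly-remaindered part carrying $f_{(M+1,N+1)}(s,t)$.

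Next I would multiply this expansion by $P(y,z)$ and integrate over $[a,b]\times[c,d]$. The polynomial part yields the first double sum at once after renaming the integration variables $(y,z)\to(s,t)$, since $f_{(i,j)}(a,c)$ is constant. For the three remainder terms the integrals run over triangular regions of the form $\{a\le s\le y\le b\}$ and $\{c\le t\le z\le d\}$; applying Fubini's theorem (justified by the assumed continuity of the partial derivatives and integrability of $P$) moves the Taylor variables to the outside and the $P$-variables inside. A final swap of the names of the $P$-variables and the remainder variables casts each term into the stated shape --- for instance turning the double remainder into the last four-fold iterated integral with weight $\frac{(s-y)^M}{M!}\,\frac{(t-z)^N}{N!}$ and argument $f_{(M+1,N+1)}(y,z)$.

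The main obstacle is purely bookkeeping rather than conceptual: getting the limits right when Fubini is applied to the doubly-remaindered term, where one must simultaneously interchange $(s,y)$ over $a\le s\le y\le b$ and $(t,z)$ over $c\le t\le z\le d$, and then check that the subsequent relabelling reproduces precisely the factors $\frac{(s-y)^M}{M!}$ and $\frac{(t-z)^N}{N!}$ together with the evaluation point $f_{(M+1,N+1)}(y,z)$ appearing in the claim. Everything else is a routine two-fold iteration of the one-variable argument behind Theorem~\ref{thm1}.
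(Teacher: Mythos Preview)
Your approach is correct and is precisely the natural two-variable extension of the argument behind Theorem~\ref{thm1}. Note, however, that the paper does not supply its own proof of this lemma: it is \emph{recalled} from \cite{Asif-popoviciu} and quoted without argument, serving only as input to the proof of Theorem~\ref{th1}. So there is no ``paper's proof'' to compare against here; your sketch simply fills in what the cited reference would contain, and it does so along the expected lines (iterated Taylor expansion at $(a,c)$, multiply by $P$, integrate, Fubini on the triangular regions, relabel).
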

\begin{theorem}\label{th1}
Let $f$ has continuous partial derivatives $f_{(i,j)}$ and $P,f:I\times J\rightarrow\mathbb{R}$ be both integrable functions, where $i\in\{0,1,2,\cdots,M,M+1\}$ and $j\in\{0,1,2,\ldots,N,N+1\}$, then
 \begin{eqnarray}\label{dodu}
 &&\int_a^b \int_c^d P(y,z)f(y,z) dz\, dy\\ \nonumber
&&=
\sum_{i=0}^{M} \sum_{j=0}^{N}  \int_a^b \int_c^d P(y,z) \frac{(b-y)^i}{i!}   \frac{(d-z)^j}{j!}  (-1)^{i+j} f_{(i,j)}(b,d) dz \, dy \\ \nonumber
&&+
 \sum_{j=0}^{N}\int_a^b  \int_a^s  \int_c^d P(y,z) \frac{(s-y)^M}{M!}\frac{(d-z)^j}{j!} (-1)^{M+j+1} f_{(M+1,j)}(s,d)  dz \, dy \, ds \\ \nonumber
&&+
\sum_{i=0}^{M} \int_c^d  \int_a^b  \int_c^t P(y,z) \frac{(b-y)^i}{i!} \frac{(t-z)^N}{N!} (-1)^{i+N+1} f_{(i,N+1)}(b,t)  dz\, dy\,  dt\\ \nonumber
&&+
\int_a^b  \int_c^d \int_a^s    \int_c^t P(y,z) \frac{(s-y)^M}{M!} \frac{(t-z)^N}{N!} (-1)^{M+N}f_{(M+1,N+1)}(s,t)  dz \, dy\, dt\,  ds. \\ \nonumber
\end{eqnarray}
\end{theorem}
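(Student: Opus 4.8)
The plan is to imitate the proof of Theorem~\ref{thm1}, replacing its single Taylor expansion by an iterated two-variable one and its single Fubini swap by several; this is precisely the companion of Lemma~\ref{thm4.2}, but with the expansion taken about the corner $(b,d)$ rather than $(a,c)$, which is what produces the alternating signs. First I would expand $f$ in the $y$-variable about $b$ up to order $M$ with integral remainder, exactly as in Theorem~\ref{thm1}, writing $f(y,z)=\sum_{i=0}^{M}(-1)^i f_{(i,0)}(b,z)\frac{(b-y)^i}{i!}+\int_y^b(-1)^{M+1}f_{(M+1,0)}(s,z)\frac{(s-y)^M}{M!}\,ds$. Then I would expand each coefficient function $z\mapsto f_{(i,0)}(b,z)$ and $z\mapsto f_{(M+1,0)}(s,z)$ in the $z$-variable about $d$ up to order $N$, again with integral remainder, e.g. $f_{(i,0)}(b,z)=\sum_{j=0}^{N}(-1)^j f_{(i,j)}(b,d)\frac{(d-z)^j}{j!}+\int_z^d(-1)^{N+1}f_{(i,N+1)}(b,t)\frac{(t-z)^N}{N!}\,dt$. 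Substituting the second expansion into the first and multiplying out produces exactly four groups of terms: a double finite sum carrying $(-1)^{i+j}f_{(i,j)}(b,d)$; a term finite in $j$ with a $y$-remainder carrying $(-1)^{M+1+j}f_{(M+1,j)}(s,d)$; a symmetric term with a $z$-remainder carrying $(-1)^{i+N+1}f_{(i,N+1)}(b,t)$; and a double-remainder term carrying $(-1)^{M+N+2}=(-1)^{M+N}$ times $f_{(M+1,N+1)}(s,t)$, matching the four lines on the right of~(\ref{dodu}).

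Next I would multiply this representation of $f(y,z)$ by $P(y,z)$ and integrate over $[a,b]\times[c,d]$ in $dz\,dy$. The finite double sum immediately gives the first line of~(\ref{dodu}), since the factor $(-1)^{i+j}f_{(i,j)}(b,d)$ is constant and pulls outside the integral. The remaining three groups still carry the remainder integrals $\int_y^b(\cdots)\,ds$ and/or $\int_z^d(\cdots)\,dt$ inside the $dz\,dy$ integration, so the remaining work is purely to interchange orders of integration.

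The main, and essentially only, obstacle is the bookkeeping of the Fubini interchanges and their limits. For the second group I would swap $y$ and $s$ over the triangle $\{a\le y\le s\le b\}$, converting $\int_a^b\!\int_y^b(\cdots)\,ds\,dy$ into $\int_a^b\!\int_a^s(\cdots)\,dy\,ds$; this is the exact two-variable analogue of the swap at the end of the proof of Theorem~\ref{thm1} and yields the second line of~(\ref{dodu}). The third group is handled by the symmetric swap of $z$ and $t$ over $\{c\le z\le t\le d\}$, giving the third line. For the fourth group I would perform both swaps; since the $y$--$s$ and $z$--$t$ integrations are independent, they may be carried out one at a time, producing the order $\int_a^b\!\int_c^d\!\int_a^s\!\int_c^t$ of the last line. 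Collecting the four resulting expressions gives~(\ref{dodu}). Finally I would note that continuity of the partial derivatives $f_{(i,j)}$ through order $(M+1,N+1)$ together with integrability of $P$ makes every integrand integrable on the relevant triangular prisms, so each application of Fubini's theorem is legitimate.
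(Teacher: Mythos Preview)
Your proof is correct. You carry out the two-variable analogue of the proof of Theorem~\ref{thm1}: an iterated Taylor expansion of $f(y,z)$ about the corner $(b,d)$, first in $y$ to order $M$ and then in $z$ to order $N$, followed by multiplication by $P$ and the appropriate Fubini swaps over the triangles $\{a\le y\le s\le b\}$ and $\{c\le z\le t\le d\}$. The sign bookkeeping you describe is accurate, including $(-1)^{M+N+2}=(-1)^{M+N}$ for the double-remainder term.

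The paper takes a different route. Rather than redoing the Taylor/Fubini argument, it starts from the already-established Lemma~\ref{thm4.2} (the analogous identity with expansion about the corner $(a,c)$, quoted from \cite{Asif-popoviciu}) and simply substitutes the reversed intervals $[A,B]=[b,a]$, $[C,D]=[d,c]$. Each reversed integral contributes a factor $-1$ and each power $(y-b)^i$ contributes $(-1)^i$; tracking these signs term by term transforms the identity of Lemma~\ref{thm4.2} into~(\ref{dodu}). So the paper's proof is essentially a reflection trick applied to a known formula, while yours is a self-contained derivation from first principles. Your approach has the advantage of being independent of Lemma~\ref{thm4.2} and of mirroring the one-variable argument exactly; the paper's approach is shorter once Lemma~\ref{thm4.2} is granted. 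The authors themselves note after their proof that the result ``can also be obtained by using Taylor series expansion,'' which is precisely what you have done.
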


\begin{proof}
We restate the identity given in Lemma \ref{thm4.2} as follows
 \begin{eqnarray}
 &&\int_A^B \int_C^D P(y,z)f(y,z) dz\, dy\\ \nonumber
&&=
\sum_{i=0}^{M} \sum_{j=0}^{N}  \int_A^B \int_C^D P(s,t) \frac{(s-A)^i}{i!}   \frac{(t-C)^j}{j!} f_{(i,j)}(A,C)  dt \, ds \\ \nonumber
&&+
 \sum_{j=0}^{N}\int_A^B  \int_y^B  \int_C^D P(s,t) \frac{(s-y)^M}{M!}\frac{(t-C)^j}{j!} f_{(M+1,j)}(y,C)  dt \, ds \, dy \\ \nonumber
&&+
\sum_{i=0}^{M} \int_C^D  \int_A^B  \int_z^D P(s,t) \frac{(s-A)^i}{i!} \frac{(t-z)^N}{N!} f_{(i,N+1)}(A,z)  dt\, ds\,  dz\\ \nonumber
&&+
\int_A^B  \int_C^D \int_y^B    \int_z^D P(s,t) \frac{(s-y)^M}{M!} \frac{(t-z)^N}{N!} f_{(M+1,N+1)}(y,z)  dt \, ds\, dz\,  dy. \\ \nonumber
\end{eqnarray}
Let us substitute $[A,B]=[b,a]$ and $[C,D]=[d,c]$. Then $\int_A^B = \int_b^a= -\int_a^b$ etc. and we change the variables names $y\leftrightarrow s$, $z\leftrightarrow t$, then the right hand side:
\begin{eqnarray}\label{mainth4}
 &&\int_b^a \int_d^c P(y,z)f(y,z) dz\, dy\\ \nonumber
&&=
\sum_{i=0}^{M} \sum_{j=0}^{N}  \int_b^a \int_d^c P(y,z) \frac{(y-b)^i}{i!}   \frac{(z-d)^j}{j!} f_{(i,j)}(b,d)  dz \, dy \\ \nonumber
&&+
 \sum_{j=0}^{N}\int_b^a  \int_s^a  \int_d^c P(y,z) \frac{(y-s)^M}{M!}\frac{(z-d)^j}{j!} f_{(M+1,j)}(s,d)  dz \, dy \, ds \\ \nonumber
&&+
\sum_{i=0}^{M} \int_d^c  \int_b^a  \int_t^c P(y,z) \frac{(y-b)^i}{i!} \frac{(z-t)^N}{N!} f_{(i,N+1)}(b,t)  dz\, dy\,  dt\\ \nonumber
&&+
\int_b^a  \int_d^c \int_s^a    \int_t^c P(y,z) \frac{(y-s)^M}{M!} \frac{(z-t)^N}{N!} f_{(M+1,N+1)}(s,t)  dz \, dy\, dt\,  ds. \\ \nonumber
\end{eqnarray}
The left hand side of the \ref{mainth4} may be written as
\begin{eqnarray*}
\int_b^a \int_d^c f(y,z)P(y,z) dz\, dy=\int_a^b \int_c^d (-1)^2f(y,z)P(y,z) dz\, dy= \int_a^b \int_c^d f(y,z)P(y,z) dz\, dy\\
 \end{eqnarray*}
We can write the first summand on right hand side as
\begin{eqnarray*}
&&\sum_{i=0}^{M} \sum_{j=0}^{N}  \int_b^a \int_d^c P(y,z) \frac{(y-b)^i}{i!}   \frac{(z-d)^j}{j!} f_{(i,j)}(b,d)  dz \, dy \\
&&= \sum_{i=0}^{M} \sum_{j=0}^{N}  \int_a^b \int_c^d (-1)^2 P(y,z) (-1)^i \frac{(b-y)^i}{i!}  (-1)^j  \frac{(d-z)^j}{j!} f_{(i,j)}(b,d) dz \, dy \\
&&= \sum_{i=0}^{M} \sum_{j=0}^{N}  \int_a^b \int_c^d (-1)^{i+j} P(y,z) \frac{(b-y)^i}{i!}     \frac{(d-z)^j}{j!}  f_{(i,j)}(b,d)   dz \, dy \\
 \end{eqnarray*}
Also write the second summand on right hand side as

\begin{eqnarray*}
&&\sum_{j=0}^{N}\int_b^a  \int_s^a  \int_d^c P(y,z) \frac{(y-s)^M}{M!}\frac{(z-d)^j}{j!} f_{(M+1,j)}(s,d)  dz \, dy \, ds \\
&&=\sum_{j=0}^{N}\int_a^b  \int_a^s  \int_c^d (-1)^3 P(y,z) (-1)^M \frac{(s-y)^M}{M!} (-1)^j\frac{(d-z)^j}{j!}  f_{(M+1,j)}(s,d) dz \, dy \, ds \\
&&=\sum_{j=0}^{N}\int_a^b  \int_a^s  \int_c^d (-1)^{M+1+j} P(y,z) \frac{(s-y)^M}{M!}  \frac{(d-z)^j}{j!} f_{(M+1,j)}(s,d)    dz \, dy \, ds
 \end{eqnarray*}
Similarly the third summand is rewritten as
\begin{eqnarray*}
&&\sum_{i=0}^{M} \int_d^c  \int_b^a  \int_t^c P(y,z) \frac{(y-b)^i}{i!} \frac{(z-t)^N}{N!} f_{(i,N+1)}(b,t)  dz\, dy\,  dt\\
&&=
\sum_{i=0}^{M} \int_c^d  \int_a^b  \int_c^t (-1)^3 P(y,z)   (-1)^i \frac{(b-y)^i}{i!} (-1)^N \frac{(t-z)^N}{N!} f_{(i,N+1)}(b,t) dz\, dy\,  dt\\
 &&=
\sum_{i=0}^{M} \int_c^d  \int_a^b  \int_c^t (-1)^{N+1+i} P(y,z)  \frac{(b-y)^i}{i!} \frac{(t-z)^N}{N!} f_{(i,N+1)}(b,t)    dz\, dy\,  dt
 \end{eqnarray*}
Finally, last summand on right side rewritten as

\begin{eqnarray*}
&&\int_b^a  \int_d^c \int_s^a    \int_t^c P(y,z) \frac{(y-s)^M}{M!} \frac{(z-t)^N}{N!} f_{(M+1,N+1)}(s,t)  dz \, dy\, dt\,  ds \\
&&= \int_a^b  \int_c^d \int_a^s    \int_c^t (-1)^4 P(y,z) (-1)^M\frac{(s-y)^M}{M!} (-1)^N\frac{(t-z)^N}{N!} f_{(M+1,N+1)}(s,t)  dz \, dy\, dt\,  ds\\
&&= \int_a^b  \int_c^d \int_a^s    \int_c^t (-1)^{M+N} P(y,z) \frac{(s-y)^M}{M!} \frac{(t-z)^N}{N!} f_{(M+1,N+1)}(s,t)  dz \, dy\, dt\,  ds\\
\end{eqnarray*}
By substituting all these expression in \ref{mainth4} we would arrive at our required result.
\end{proof}
\begin{remark}
\enumerate
\item This result can also be obtained by using Taylor series expansion and by using Mathematical Induction.
\item If in Theorem \ref{th1} we replace $f(y,z)$ by $f(y)g(z)$, then we obtain the following statement.
\end{remark}

\begin{corollary}
Let $g\in C^{(N+1)}(J)$, $f\in C^{(M+1)}(I)$, be two different functions and $P:I\times J \to \mathbb{R}$, be an integrable function, then state following identity as:
\begin{eqnarray*}\nonumber
 &&\int_a^b \int_c^d f(y,z)P(y,z) dz\, dy\\ \nonumber
&&=
\sum_{i=0}^{M} \sum_{j=0}^{N}  \int_a^b \int_c^d P(y,z) (-1)^{i+j} \frac{(d-z)^j}{j!} g^{(j)}(d) \frac{(b-y)^i}{i!} f^{(i)}(b)  dz \, dy \\ \nonumber
&&+
 \sum_{j=0}^{N}\int_a^b  \int_a^s  \int_c^d P(y,z) (-1)^{M+1+j} \frac{(d-z)^j}{j!}g^{(j)}(d) \frac{(s-y)^M}{M!}f^{(M+1)}(y) dz \, dy \, ds \\ \nonumber
&&+
\sum_{i=0}^{M} \int_c^d  \int_a^b  \int_c^t P(y,z) (-1)^{N+1+i} \frac{(t-z)^N}{N!}g^{(N+1)}(z) \frac{(b-y)^i}{i!}f^{(i)}(b)  dz\, dy\,  dt\\
&&+
\int_a^b  \int_c^d \int_a^s    \int_c^t P(y,z) (-1)^{M+N} \frac{(t-z)^N}{N!}g^{(N+1)}(z) \frac{(s-y)^M}{M!}f^{(M+1)}(y)  dz \, dy\, dt\,  ds.
\end{eqnarray*}
\end{corollary}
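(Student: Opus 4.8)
The plan is to obtain this corollary as a direct specialization of Theorem \ref{th1}, taking the separable integrand $f(y)g(z)$ in place of the general two-variable function $f(y,z)$ in identity (\ref{dodu}). First I would check that the product satisfies the hypotheses of Theorem \ref{th1}: since $f\in C^{(M+1)}(I)$ and $g\in C^{(N+1)}(J)$, every mixed partial $f_{(i,j)}$ with $0\le i\le M+1$ and $0\le j\le N+1$ exists and is continuous, and the integrand $P(y,z)f(y)g(z)$ is integrable on $I\times J$ because $P$ is integrable while $f$ and $g$, being continuous on the compact intervals, are bounded. Thus (\ref{dodu}) applies verbatim.

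The key computational step is the factorization of the mixed partial derivatives of a separable function. For the product $f(y)g(z)$ one has
\[
f_{(i,j)}(y,z)=\frac{\partial^{\,i+j}}{\partial y^{i}\,\partial z^{j}}\bigl(f(y)g(z)\bigr)=f^{(i)}(y)\,g^{(j)}(z),
\]
so each derivative factor occurring in (\ref{dodu}) splits into a product of single-variable derivatives evaluated at the appropriate points. Concretely this yields $f_{(i,j)}(b,d)=f^{(i)}(b)g^{(j)}(d)$ in the first summand, $f_{(M+1,j)}(s,d)=f^{(M+1)}(s)g^{(j)}(d)$ in the second, $f_{(i,N+1)}(b,t)=f^{(i)}(b)g^{(N+1)}(t)$ in the third, and $f_{(M+1,N+1)}(s,t)=f^{(M+1)}(s)g^{(N+1)}(t)$ in the fourth.

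Substituting these factorizations into the four summands of (\ref{dodu}) and regrouping the $y$-dependent factors (the Taylor kernels $\tfrac{(b-y)^i}{i!}$, $\tfrac{(s-y)^M}{M!}$ together with the $f$-derivatives) against the $z$-dependent factors (the kernels $\tfrac{(d-z)^j}{j!}$, $\tfrac{(t-z)^N}{N!}$ together with the $g$-derivatives) produces exactly the four terms of the asserted identity, with the signs $(-1)^{i+j}$, $(-1)^{M+1+j}$, $(-1)^{N+1+i}$ and $(-1)^{M+N}$ carried over unchanged from Theorem \ref{th1}. Since the argument is a pure substitution, I do not anticipate any genuine difficulty; the only point demanding care is the bookkeeping of evaluation points, namely that in each collected term the derivatives $f^{(M+1)}$ and $g^{(N+1)}$ are evaluated at the correct integration variables, so that the regrouped expression matches the stated identity summand by summand.
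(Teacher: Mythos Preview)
Your approach is correct and is exactly what the paper does: the corollary is stated immediately after a remark saying that replacing $f(y,z)$ by $f(y)g(z)$ in Theorem~\ref{th1} yields this statement, with no further argument given. Your caution about evaluation points is well-founded---the substitution actually gives $f^{(M+1)}(s)$ and $g^{(N+1)}(t)$ rather than the $f^{(M+1)}(y)$ and $g^{(N+1)}(z)$ printed in the corollary, which appears to be a typographical slip in the paper itself.
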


We obtain necessary and sufficient conditions by using results of previous theorem that $\Lambda (f) \geq 0$ holds  $\forall (M+1,N+1)-\nabla-$convex function and only necessary condition $\forall (M+1,N+1)-$completely monotonic function for two-variables function.
\begin{theorem}\label{positive}
Let suppositions of Theorem $\ref{th1}$ be true, then following inequality holds;
\begin{align} \label{3-3}
\Lambda(f)=\int_{a}^{b} \int_c^d P(y,z) f(y,z)dz\, dy\geq 0
\end{align}
 for all $(M+1,N+1)-\nabla-$convex function $f$ on $I\times J$, if and only if
\begin{eqnarray} \label{3-4}
\int_a^b \int_c^d P(y,z) \frac{(b-y)^i}{i!}   \frac{(d-z)^j}{j!} dz \, dy &=&0, \,\,\, \,\,i\,\,\in \{0,\ldots,M\};\,\,\, j\in \{0,\ldots,N\} \\ \label{3-5}
\int_a^s \int_c^d P(y,z) \frac{(s-y)^M}{M!}\frac{(d-z)^j}{j!} dz \, dy&=&0,\,\,\,\,\,\, j\in \{0,\ldots,N\};\,\,\,\forall \,\,s\in[a,b] \\ \label{3-6}
\int_a^b \int_c^t P(y,z) \frac{(b-y)^i}{i!} \frac{(t-z)^N}{N!} dz \, dy&=&0,\,\,\, \,\,i\,\,\in \{0,\ldots,M\};\,\,\,\forall \,\,t\in[c,d] \\ \label{3-7}
\int_a^s \int_c^t P(y,z) \frac{(s-y)^M}{M!} \frac{(t-z)^N}{N!} dz \, dy&\geq& 0,\,\,\,\forall \,\,s\in[a,b];\,\,\,\forall \,\,t\in[c,d].
\end{eqnarray}
\end{theorem}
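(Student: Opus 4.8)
The plan is to imitate the one-variable argument of Theorem \ref{thm2}, using the four-term identity (\ref{dodu}) of Theorem \ref{th1} as the engine and exploiting that for a differentiable $(M+1,N+1)-\nabla-$convex function one has $(-1)^{M+N}f_{(M+1,N+1)}\ge 0$; this is precisely the sign carried by the fourth summand of (\ref{dodu}), since $(-1)^{(M+1)+(N+1)}=(-1)^{M+N}$.

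For the sufficiency direction I would substitute (\ref{dodu}) into $\Lambda(f)$. Conditions (\ref{3-4}), (\ref{3-5}) and (\ref{3-6}) annihilate the first, second and third summands term by term, leaving only the fourth. There the inner double integral $\int_a^s\int_c^t P(y,z)\frac{(s-y)^M}{M!}\frac{(t-z)^N}{N!}\,dz\,dy$ is nonnegative by (\ref{3-7}) and $(-1)^{M+N}f_{(M+1,N+1)}(s,t)\ge 0$ by $\nabla-$convexity, so the integrand over $(s,t)\in[a,b]\times[c,d]$ is nonnegative and $\Lambda(f)\ge 0$ follows.

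For the necessity direction I would feed explicit test functions into (\ref{3-3}), each chosen so that $\Lambda$ reproduces the left-hand side of one condition. To obtain (\ref{3-4}) I take $f_1(y,z)=\frac{(b-y)^i}{i!}\frac{(d-z)^j}{j!}$ with $0\le i\le M$, $0\le j\le N$, and $f_2=-f_1$; both are $(M+1,N+1)-\nabla-$convex because their mixed derivative of order $(M+1,N+1)$ vanishes identically (the factors are polynomials of degree $\le M$ in $y$ and $\le N$ in $z$), so $\Lambda(f_1)\ge 0$ and $\Lambda(-f_1)\ge 0$ force equality. For (\ref{3-5}) I use the function equal to $\frac{(s-y)^M}{M!}\frac{(d-z)^j}{j!}$ for $y<s$ and $0$ for $y\ge s$, together with its negative; the factor $\frac{(d-z)^j}{j!}$ is still killed by $N+1$ differentiations in $z$, so the top mixed derivative vanishes, both signs remain admissible, and the equality (\ref{3-5}) is forced. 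Condition (\ref{3-6}) is the mirror image, obtained from the function equal to $\frac{(b-y)^i}{i!}\frac{(t-z)^N}{N!}$ for $z<t$ and $0$ otherwise. Finally, for (\ref{3-7}) I take the doubly truncated function equal to $\frac{(s-y)^M}{M!}\frac{(t-z)^N}{N!}$ on $\{y<s,\ z<t\}$ and $0$ elsewhere; here the mixed derivative of order $(M+1,N+1)$ equals $(-1)^{M+N}$ times a nonnegative point-mass kernel, so $(-1)^{M+N}f_{(M+1,N+1)}\ge 0$ holds but fails for the negative, only the single inequality $\Lambda(f)\ge 0$ survives, and it is exactly (\ref{3-7}).

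The one genuinely delicate point is the verification that these truncated-power test functions qualify as $(M+1,N+1)-\nabla-$convex: their highest mixed derivative is concentrated on the kink lines $y=s$ and $z=t$ and must be read distributionally, or approximated by smooth $\nabla-$convex functions. As in the one-variable Theorem \ref{thm2}, I would record this caveat and otherwise treat the substitutions as routine.
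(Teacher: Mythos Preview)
Your proposal is correct and follows essentially the same route as the paper: sufficiency by reading off the identity (\ref{dodu}) with the first three blocks killed by (\ref{3-4})--(\ref{3-6}) and the fourth rendered nonnegative by (\ref{3-7}) together with $(-1)^{M+N}f_{(M+1,N+1)}\ge 0$; necessity by plugging in exactly the same four families of (truncated) polynomial test functions $\pm f^1,\ldots,f^7$. Your explicit caveat about the distributional nature of the highest mixed derivative on the kink lines is a point the paper passes over in silence, so in that respect your write-up is actually more careful.
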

\begin{proof}
If (\ref{3-4}), (\ref{3-5}) and (\ref{3-6}) hold, then first, second and third sums are zero in (\ref{dodu}), then by using (\ref{3-7}) we obtain the required inequality (\ref{3-3}).

Conversely, if substitute the following functions in (\ref{3-3}). Then
\begin{eqnarray*}
f^{1}(y,z)&=&\frac{(b-y)^{m} }{m!} \frac{(d-z)^{n} }{n!}\quad \text{and}\quad f^{2}=-f^{1}
 \end{eqnarray*}
 for $0\leq n\leq N$ and $0\leq m\leq M$ such that $(-1)^{M+N}f^{l}_{(M+1,N+1)}\geq 0,\;\;\;l\in \{1,2\},$  then obtain the desired equation (\ref{3-4}) i.e. $$\int _{a}^{b}\int _{c}^{d}P(y,z)\frac{(b-y)^{m} }{m!} \frac{(d-z)^{n} }{n!}dz\, dy=0\;\;\;0\leq m\leq M;\;\;\;0\leq n\leq N.$$
 In the similar manner, if take the following functions in (\ref{3-3})  $\forall\,\,\, s\in[a,b]$ and $0\leq n\leq N$
  $$
f^{3}(y,z)= \left\{ \begin{array}{rl}
\frac{(s-y)^M}{M!} \frac{(d-z)^n}{n!}, & y<s \\
0, & y\geq s
\end{array} \right.
\quad \text{and} \quad
f^{4}=-f^{3}
$$
such that $(-1)^{M+N}f^{l}_{(M+1,N+1)}\geq 0,\quad l\in \{3,4\}$, we obtain desired equation (\ref{3-5}) i.e. $$\int _{a}^{s}\int _{c}^{d}P(y,z)\frac{(s-y)^{M} }{M!} \frac{(d-z)^{n} }{n!}dz\, dy=0,\;\;\;0\leq n\leq N;\,\,\,\forall\,\,\,s\in[a,b].$$
Similarly, if take the following functions in (\ref{3-3}) $\forall\,\,\,t\in[c,d]$ and $0\leq m\leq M$
  $$
f^{5}(y,z)= \left\{ \begin{array}{rl}
\frac{(b-y)^m}{m!} \frac{(t-z)^N}{N!}, & z<t \\
0, & z\geq t
\end{array} \right.
\quad \text{and} \quad
f^{6}=-f^{5}
$$
such that $(-1)^{M+N}f^{l}_{(M+1,N+1)}\geq 0,\quad l\in \{5,6\}$, we can obtain above equation (\ref{3-6}) i.e. $$\int _{a}^{b}\int _{c}^{t}P(y,z)\frac{(b-y)^{m} }{m!} \frac{(t-z)^{N} }{N!}dz\, dy=0,\;\;\;0\leq m\leq M;\,\,\,\forall\,\,\,t\in[c,d].$$
By considering the below function in (\ref{3-3}), obtain the desired last inequality (\ref{3-7})  for $s\in[a,b],\;t\in[c,d]$
 $$
f^7(y,z)= \left\{ \begin{array}{rl}
\frac{(s-y)^{M} }{M!} \frac{(t-z)^{N} }{N!}, & y<s,\quad \quad z<t \\
0, & y\geq s\;\;\;\text{or}\;\;\;z\geq t.
\end{array} \right.
$$
\end{proof}
\begin{theorem}\label{positive-0}
Let suppositions of Theorem $\ref{th1}$ be true, then following inequality holds;
\begin{align} \label{3-31}
\Lambda(f)=\int_{a}^{b} \int_c^d P(y,z) f(y,z)dz\, dy\geq 0
\end{align}
for all completely monotonic function $f$ of order $(M+1,N+1)$ on $I\times J$ if
\begin{eqnarray} \label{3-41}
\int_a^b \int_c^d P(y,z) \frac{(b-y)^i}{i!}   \frac{(d-z)^j}{j!} dz \, dy &=&0, \,\,\, \,\,i\,\,\in \{0,\ldots,M\};\,\,\, j\in \{0,\ldots,N\} \\ \label{3-51}
\int_a^s \int_c^d P(y,z) \frac{(s-y)^M}{M!}\frac{(d-z)^j}{j!} dz \, dy&=&0,\,\,\,\,\,\, j\in \{0,\ldots,N\};\,\,\,\forall \,\,s\in[a,b] \\ \label{3-61}
\int_a^b \int_c^t P(y,z) \frac{(b-y)^i}{i!} \frac{(t-z)^N}{N!} dz \, dy&=&0,\,\,\, \,\,i\,\,\in \{0,\ldots,M\};\,\,\,\forall \,\,t\in[c,d] \\ \label{3-71}
\int_a^s \int_c^t P(y,z) \frac{(s-y)^M}{M!} \frac{(t-z)^N}{N!} dz \, dy&\geq& 0,\,\,\,\forall \,\,s\in[a,b];\,\,\,\forall \,\,t\in[c,d].
\end{eqnarray}
\end{theorem}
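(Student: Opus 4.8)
The plan is to read the inequality directly off the four-term expansion (\ref{dodu}) supplied by Theorem \ref{th1}, exactly as in the forward direction of Theorem \ref{positive} and as in the completely monotonic one-variable case. First I would feed the hypotheses into (\ref{dodu}) term by term. The first double sum carries the factor $\int_a^b\int_c^d P(y,z)\frac{(b-y)^i}{i!}\frac{(d-z)^j}{j!}\,dz\,dy$ in front of each $(-1)^{i+j}f_{(i,j)}(b,d)$; by (\ref{3-41}) every such factor vanishes, so the whole first sum is zero. In the second sum the inner integral over $[a,s]\times[c,d]$ is precisely the left-hand side of (\ref{3-51}), which is zero for every $s\in[a,b]$, so the integrand of the outer $ds$-integral is identically zero and the second sum vanishes; the symmetric computation with (\ref{3-61}) kills the third sum.

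This leaves only the fourth term,
\begin{align*}
\Lambda(f)=\int_a^b\int_c^d\left(\int_a^s\int_c^t P(y,z)\frac{(s-y)^M}{M!}\frac{(t-z)^N}{N!}\,dz\,dy\right)(-1)^{M+N}f_{(M+1,N+1)}(s,t)\,dt\,ds.
\end{align*}
Here I would invoke the defining sign condition of a completely monotonic function of order $(M+1,N+1)$ at the top indices $i=M+1$, $j=N+1$, namely $(-1)^{(M+1)+(N+1)}f_{(M+1,N+1)}=(-1)^{M+N}f_{(M+1,N+1)}\ge 0$. Combined with the nonnegativity (\ref{3-71}) of the bracketed inner integral for all $(s,t)$, the integrand is a product of two nonnegative factors, hence the $(s,t)$-integral is nonnegative and (\ref{3-31}) follows.

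I do not expect a genuine obstacle: the argument is a bookkeeping check that the three linear conditions annihilate the first three sums while the single surviving sign condition matches the top-order derivative sign forced by complete monotonicity. The point worth flagging is why only sufficiency is asserted, unlike Theorem \ref{positive}. The converse there was obtained by substituting the test functions $f^1,\dots,f^7$ together with their negatives, and for $\nabla$-convexity a function and its negative can both lie in the class when the top derivative vanishes. For complete monotonicity this symmetry fails, since the lower-order constraints $(-1)^{i+j}f_{(i,j)}\ge 0$ are generally not preserved under $f\mapsto -f$; the substitution argument that produced the equalities (\ref{3-41})--(\ref{3-61}) is therefore unavailable, and necessity cannot be claimed.
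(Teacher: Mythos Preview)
Your proof is correct and follows exactly the same approach as the paper: feed conditions (\ref{3-41})--(\ref{3-61}) into the identity (\ref{dodu}) to annihilate the first three sums, then use (\ref{3-71}) together with the sign condition $(-1)^{M+N}f_{(M+1,N+1)}\ge 0$ from complete monotonicity on the surviving term. Your write-up is in fact more detailed than the paper's one-line proof and additionally supplies a clear explanation for why only sufficiency is asserted, which the paper does not spell out.
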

\begin{proof}
If (\ref{3-41}), (\ref{3-51}) and (\ref{3-61}) hold, then first, second and third sums are zero in (\ref{dodu}), then by using (\ref{3-71}) obtain the desired inequality (\ref{3-31}).
\end{proof}
\begin{remark}
If we simply put $f(y,z)=f(y)g(z)$ in previous two results, then we obtain the similar results for two functions $f$ and $g$.
\end{remark}
\section{Mean Value Theorems}
It is known fact that the mean value theorem is valueable tool for getting interesting and important results of classical real analysis. In the field of differential calculus, the most demanding theorems are Lagrange and Cauchy mean value theorems. For more materials on this topic (see \cite{LC-MVT}). Here, we would give some generalized mean value theorems of Lagrange and Cauchy-type.
\begin{theorem}\label{th3}
Let $P:I\times J \rightarrow \mathbb{R}$, be an integrable function and $f \in C^{(M+1,N+1)}(I\times J)$, be a $(M+1,N+1)-\nabla-$convex function on the interval $I\times J$. Let $\Lambda$ be a linear functional as stated in $(\ref{3-3})$ and the conditions $(\ref{3-4})$, $(\ref{3-5})$, $(\ref{3-6})$ and $(\ref{3-7})$ be true for function $P$ in Theorem  $\ref{positive}$, then  $\exists (\eta,\zeta) \,\,\in\,\, I\times J$ $\backepsilon$
\begin{equation}\label{3-8}
 \Lambda(f)= \Lambda(G_0) f_{(M+1,N+1)}(\eta,\zeta)
\end{equation}
where $\displaystyle G_0(y,z)=(-1)^{M+N}\frac{y^{M+1}}{(M+1)!}\frac{z^{N+1}}{(N+1)!}$.
\end{theorem}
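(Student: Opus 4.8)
The plan is to run the standard Lagrange-type argument for a nonnegative linear functional, with $G_0$ playing the role of the reference function. The one computation that drives everything is the mixed derivative of $G_0(y,z)=(-1)^{M+N}\frac{y^{M+1}}{(M+1)!}\frac{z^{N+1}}{(N+1)!}$, namely $(G_0)_{(M+1,N+1)}\equiv(-1)^{M+N}$, a constant. In particular $(-1)^{(M+1)+(N+1)}(G_0)_{(M+1,N+1)}=1\ge0$, so $G_0$ is itself $(M+1,N+1)-\nabla-$convex; hence the conditions (\ref{3-4})--(\ref{3-7}) and Theorem \ref{positive} apply to $G_0$ and yield $\Lambda(G_0)\ge0$.

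Next, since $f\in C^{(M+1,N+1)}(I\times J)$, the partial derivative $f_{(M+1,N+1)}$ is continuous on the compact rectangle $I\times J$ and therefore attains a minimum $\mu$ and a maximum $\nu$. The idea is to sandwich $\Lambda(f)$ between multiples of $\Lambda(G_0)$. To do this I would form two auxiliary functions by adding suitable scalar multiples of $G_0$ to $+f$ and to $-f$, choosing the scalars precisely so that each auxiliary function has its $(M+1,N+1)$-st mixed derivative of the sign demanded by the definition of $\nabla-$convexity; here the constant value $(-1)^{M+N}$ of $(G_0)_{(M+1,N+1)}$, together with the bounds on $f_{(M+1,N+1)}$, is exactly what makes the signs line up. Each auxiliary function is then $(M+1,N+1)-\nabla-$convex, so Theorem \ref{positive} gives $\Lambda(\cdot)\ge0$ for both, and linearity of $\Lambda$ turns these two inequalities into a two-sided estimate placing $\Lambda(f)$ between $\Lambda(G_0)$ times the minimum and the maximum of $f_{(M+1,N+1)}$.

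Finally I would split into cases on $\Lambda(G_0)$. If $\Lambda(G_0)>0$, the sandwich gives $\mu\le \Lambda(f)/\Lambda(G_0)\le \nu$; since $I\times J$ is connected and $f_{(M+1,N+1)}$ is continuous, the intermediate value theorem produces a point $(\eta,\zeta)\in I\times J$ with $f_{(M+1,N+1)}(\eta,\zeta)=\Lambda(f)/\Lambda(G_0)$, which rearranges at once to the claimed identity (\ref{3-8}). If $\Lambda(G_0)=0$, the same sandwich forces $\Lambda(f)=0$, and then (\ref{3-8}) holds trivially for any choice of $(\eta,\zeta)$.

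The main obstacle I anticipate is entirely the sign bookkeeping in the middle step: because $\nabla-$convexity of order $(M+1,N+1)$ is the condition $(-1)^{M+N}(\cdot)_{(M+1,N+1)}\ge0$ rather than ordinary convexity, one must track the factor $(-1)^{M+N}$ carefully when selecting the scalar multiples of $G_0$, so that the auxiliary functions genuinely land in the class to which Theorem \ref{positive} applies (a naive sign choice places them in the wrong cone when $M+N$ is odd). Once $\Lambda(G_0)\ge0$ and the continuity and compactness of $f_{(M+1,N+1)}$ are in hand, the remainder is routine.
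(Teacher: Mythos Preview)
Your proposal is correct and follows essentially the same Lagrange-type argument as the paper: bound the mixed derivative on the compact rectangle, form auxiliary functions from $f$ and scalar multiples of $G_0$, apply Theorem~\ref{positive} to get a two-sided estimate, and finish by the intermediate value theorem. The paper resolves the sign issue you flag by taking $U$ and $L$ to be the maximum and minimum of $(-1)^{M+N}f_{(M+1,N+1)}$ (rather than of $f_{(M+1,N+1)}$ itself), which makes the auxiliary functions $UG_0-f$ and $f-LG_0$ land in the $\nabla$-convex cone regardless of the parity of $M+N$; your explicit case split $\Lambda(G_0)>0$ versus $\Lambda(G_0)=0$ and the appeal to the intermediate value theorem are left implicit there.
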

\begin{proof}
Let$\quad U=\underset{(y,z)\in I\times J}{\max} (-1)^{M+N}f_{(M+1,N+1)}(y,z),\quad L=\underset{(y,z)\in I\times J}{\min}(-1)^{M+N}f_{(M+1,N+1)}(y,z).$ \\
Then the function$$G(y,z)=U(-1)^{M+N}\frac{y^{M+1}}{(M+1)!}\frac{z^{N+1}}{(N+1)!}-f(y,z)=U G_0(y,z)-f(y,z)$$
 gives us
$$(-1)^{M+N}G_{(M+1,N+1)}(y,z)=U-(-1)^{M+N}f_{(M+1,N+1)}(y,z)\geq0$$ i.e. $G$ is $\nabla-$convex function of order $(M+1,N+1)$ on $I\times J$.
Hence $\Lambda (G)\geq0$ using Theorem \ref{positive} and we would summerize that
$$\Lambda (f)\leq U \Lambda (G_0).$$
Similarly
$$L \Lambda(G_0)\leq \Lambda (f).$$
Now, we can write the above two inequalities as:
$$L \Lambda(G_0)\leq \Lambda (f)\leq U \Lambda (G_0)$$
which gives the required result (\ref{3-8}).
\end{proof}
\begin{theorem}\label{th2.9}
Let $f,g \in C^{(M+1,N+1)}(I\times J)$, be two $\nabla-$convex functions of order $(M+1,N+1)$ on the interval and $P:I\times J \to R$ be an integrable function. Let $\Lambda$ be a linear functional as stated in $(\ref{3-3})$ and the conditions $(\ref{3-4})$, $(\ref{3-5})$, $(\ref{3-6})$ and $(\ref{3-7})$ be true for function $P$ in Theorem  $\ref{positive}$, then $\exists$ $(\eta,\zeta) \,\,\in\,\, I\times J$ $\backepsilon$
\begin{equation*}
 \frac{\Lambda (f)}{\Lambda (g)}=\frac{f_{(M+1,N+1)}(\eta,\zeta)}{g_{(M+1,N+1)}(\eta,\zeta)}
\end{equation*}
considering with non-zero denominators.
\begin{proof}
Let $u\in C^{(M+1,N+1)}$ be a $\nabla-$convex function of order $(M+1,N+1)$ on the interval $I\times J$, be stated as:
$$u=\Lambda (g)f-\Lambda(f)g.$$
Applying Theorem \ref{th3}  $\exists$ $(\eta,\zeta)$ $\backepsilon$
$$0=\Lambda(u)=u_{(M+1,N+1)}(\eta,\zeta)\Lambda(G_0)$$
or
$$[\Lambda (g)f_{(M+1,N+1)}(\eta,\zeta)-\Lambda(f)g_{(M+1,N+1)}(\eta,\zeta)]\Lambda(G_0)=0$$
which gives desired result.
\end{proof}
\end{theorem}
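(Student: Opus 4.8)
The plan is to linearize the problem and reduce it to the Lagrange-type result already established in Theorem \ref{th3}. I would introduce the auxiliary function
$$u = \Lambda(g)\, f - \Lambda(f)\, g,$$
which again lies in $C^{(M+1,N+1)}(I\times J)$ since it is a combination of $f$ and $g$ with constant coefficients. The two numbers $\Lambda(f)$ and $\Lambda(g)$ are well defined because $f,g$ are integrable against $P$, and the hypotheses (\ref{3-4})--(\ref{3-7}) imposed on $P$ are exactly those needed to invoke Theorem \ref{positive} and hence Theorem \ref{th3}.

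The decisive observation is that, by linearity of the functional $\Lambda$,
$$\Lambda(u) = \Lambda(g)\,\Lambda(f) - \Lambda(f)\,\Lambda(g) = 0,$$
while differentiating $u$ term by term (the coefficients being constants) gives
$$u_{(M+1,N+1)} = \Lambda(g)\, f_{(M+1,N+1)} - \Lambda(f)\, g_{(M+1,N+1)}.$$
So once I know that $u$ obeys the mean-value identity (\ref{3-8}), namely $\Lambda(u) = \Lambda(G_0)\, u_{(M+1,N+1)}(\eta,\zeta)$ for some $(\eta,\zeta) \in I\times J$, the vanishing of $\Lambda(u)$ together with $\Lambda(G_0)\neq 0$ forces $u_{(M+1,N+1)}(\eta,\zeta)=0$. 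Substituting the displayed expression for $u_{(M+1,N+1)}$ and rearranging then yields $\frac{\Lambda(f)}{\Lambda(g)} = \frac{f_{(M+1,N+1)}(\eta,\zeta)}{g_{(M+1,N+1)}(\eta,\zeta)}$, provided the denominators do not vanish.

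The main obstacle is that Theorem \ref{th3} is phrased for $\nabla$-convex functions, whereas the combination $u$ need not be $(M+1,N+1)$-$\nabla$-convex, since its top derivative $u_{(M+1,N+1)}$ may change sign. To get around this I would revisit the proof of Theorem \ref{th3} and note that the $\nabla$-convexity of the argument is never actually used there: setting $U=\max_{I\times J}(-1)^{M+N}f_{(M+1,N+1)}$ and $L=\min_{I\times J}(-1)^{M+N}f_{(M+1,N+1)}$, the auxiliary functions $U G_0 - f$ and $f - L G_0$ are $\nabla$-convex automatically by the choice of $U$ and $L$, so Theorem \ref{positive} applies to them and produces the sandwich $L\,\Lambda(G_0) \le \Lambda(f) \le U\,\Lambda(G_0)$ for every $f\in C^{(M+1,N+1)}(I\times J)$. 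Thus the identity (\ref{3-8}) is valid for any such function, in particular for $u$; here one uses continuity of $(-1)^{M+N}u_{(M+1,N+1)}$ and the intermediate value theorem on the connected rectangle $I\times J$ to locate the point $(\eta,\zeta)$. With this slightly strengthened reading of Theorem \ref{th3} in hand, the argument of the previous paragraph goes through and completes the proof.
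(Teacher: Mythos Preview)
Your proposal is correct and follows essentially the same route as the paper: define $u=\Lambda(g)f-\Lambda(f)g$, note that $\Lambda(u)=0$, apply Theorem~\ref{th3} to $u$, and read off the quotient identity. In fact your write-up is more careful than the paper's on one point: the paper simply asserts that $u$ is $(M+1,N+1)$-$\nabla$-convex, whereas you correctly observe that this need not hold and that the proof of Theorem~\ref{th3} does not actually require it (the auxiliary functions $UG_0-f$ and $f-LG_0$ are $\nabla$-convex by construction for any $f\in C^{(M+1,N+1)}$).
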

\begin{corollary}
Let $\Lambda$ be a linear functional as stated in $(\ref{3-3})$ and the conditions $(\ref{3-4})$, $(\ref{3-5})$, $(\ref{3-6})$ and $(\ref{3-7})$ be true for function $P$ with $N=M$ in Theorem  $\ref{positive}$, then $\exists$ $(\eta,\zeta) \,\,\in\,\, I\times J$ $\backepsilon$
\begin{equation*}
{(\eta\zeta)}^{p-q}=\frac{[(q+1)q(q-1)\cdots(q-M+2)(q-M+1)]^2\Lambda((yz)^{p+1})}{[(p+1)p(p-1)\cdots(p-M+2)(p-M+1)]^2\Lambda((yz)^{q+1})}
\end{equation*}
where $p,q \not\in \{-1,0,1,\ldots,M-1\}$, but lie on $-\infty<p\neq q<+\infty$.
\end{corollary}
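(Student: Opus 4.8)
The plan is to deduce this corollary as a direct specialization of the Cauchy-type mean value theorem, Theorem \ref{th2.9}, applied to the two power functions
$$f(y,z)=(yz)^{p+1}=y^{p+1}z^{p+1},\qquad g(y,z)=(yz)^{q+1}=y^{q+1}z^{q+1},$$
on the quadrant $I\times J\subset\mathbb{R_{+}}\times\mathbb{R_{+}}$, with $N=M$. Since the conditions $(\ref{3-4})$--$(\ref{3-7})$ on $P$ are assumed, the only thing I must verify before invoking the theorem is that $f$ and $g$ are \emph{admissible}: that they lie in $C^{(M+1,M+1)}(I\times J)$ and are $(M+1,M+1)-\nabla-$convex. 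Smoothness on the open positive quadrant is automatic for any real exponent, so the real content is the $\nabla$-convexity.

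First I would compute the relevant mixed partial derivative. Differentiating $y^{p+1}$ exactly $M+1$ times yields $(p+1)p(p-1)\cdots(p-M+1)\,y^{p-M}$, and likewise in the variable $z$, so that
$$f_{(M+1,M+1)}(y,z)=\big[(p+1)p(p-1)\cdots(p-M+1)\big]^{2}\,(yz)^{p-M},$$
and similarly for $g$ with $p$ replaced by $q$. Because $N=M$, the defining condition for $\nabla$-convexity of order $(M+1,M+1)$ reads $(-1)^{(M+1)+(M+1)}f_{(M+1,M+1)}=f_{(M+1,M+1)}\ge 0$; this holds since the bracketed coefficient is a perfect square and $(yz)^{p-M}>0$ on $\mathbb{R_{+}}\times\mathbb{R_{+}}$. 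Thus both $f$ and $g$ meet the hypotheses of Theorem \ref{th2.9}. Applying it furnishes a point $(\eta,\zeta)\in I\times J$ with
$$\frac{\Lambda((yz)^{p+1})}{\Lambda((yz)^{q+1})}=\frac{f_{(M+1,M+1)}(\eta,\zeta)}{g_{(M+1,M+1)}(\eta,\zeta)}=\frac{\big[(p+1)p\cdots(p-M+1)\big]^{2}}{\big[(q+1)q\cdots(q-M+1)\big]^{2}}\,(\eta\zeta)^{p-q},$$
and solving this equation for $(\eta\zeta)^{p-q}$ gives precisely the asserted identity.

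The step I expect to require the most care is justifying that the displayed ratio is genuinely well defined. One needs $g_{(M+1,M+1)}(\eta,\zeta)\neq 0$, and more basically that neither coefficient $(p+1)p\cdots(p-M+1)$ nor $(q+1)q\cdots(q-M+1)$ vanishes. This is exactly the role of the hypothesis $p,q\notin\{-1,0,1,\ldots,M-1\}$: each excluded value forces one of the $M+1$ consecutive factors to be zero, while any other real exponent keeps the product nonzero (and then $(\eta\zeta)^{p-M}>0$ keeps $g_{(M+1,M+1)}(\eta,\zeta)$ nonzero as well). The remaining restriction $p\neq q$ ensures that $(\eta\zeta)^{p-q}$ is a nontrivial power that can actually be isolated, so the final rearrangement is legitimate and the argument is complete.
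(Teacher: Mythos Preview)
Your proposal is correct and follows essentially the same approach as the paper: apply Theorem~\ref{th2.9} to the pair $f(y,z)=(yz)^{p+1}$ and $g(y,z)=(yz)^{q+1}$, compute their $(M+1,M+1)$ mixed partials, and rearrange for $(\eta\zeta)^{p-q}$. If anything, you are slightly more careful than the paper in explicitly checking the $(M+1,M+1)$-$\nabla$-convexity hypothesis (via $(-1)^{2(M+1)}=1$ and the squared coefficient) and in explaining why the exclusions $p,q\notin\{-1,0,\ldots,M-1\}$ guarantee nonvanishing denominators.
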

\begin{proof}
Let
\begin{equation*}
f(y,z)=(yz)^{p+1},\qquad g(y,z)=(yz)^{q+1}
\end{equation*}
Then
\begin{eqnarray*}
f_{(M+1,M+1)}=[(p+1)p(p-1)\ldots(p-M+2)(p-M+1)]^2 (yz)^{p-M} \\ \text{and} \qquad
g_{(M+1,M+1)}=[(q+1)q(q-1)\ldots(q-M+2)(q-M+1)]^2 (yz)^{q-M}
\end{eqnarray*}
\begin{eqnarray*}
\frac{f_{(M+1,M+1)}(\eta,\zeta)}{g_{(M+1,M+1)}(\eta,\zeta)} &&= \frac{[(p+1)p(p-1)\ldots(p-M+2)(p-M+1)]^2 (\eta\zeta)^{p-M}}{[(q+1)q(q-1)\ldots(q-M+2)(q-M+1)]^2 (\eta\zeta)^{q-M}} \\
&&= \frac{[(p+1)p(p-1)\ldots(p-M+2)(p-M+1)]^2}{[(q+1)q(q-1)\ldots(q-M+2)(q-M+1)]^2 } (\eta\zeta)^{p-q}.
\end{eqnarray*}
On the other hand:
\begin{eqnarray*}
\Lambda(f)=\Lambda((yz)^{p+1})
\end{eqnarray*}
So if we put all these in the equality Theorem \ref{th2.9}, we get
\begin{eqnarray*}
\frac{\Lambda(f)}{\Lambda(g)} &&= \frac{f_{(M+1,M+1)}(\eta,\zeta)}{g_{(M+1,M+1)}(\eta,\zeta)} \\
\frac{\Lambda((yz)^{p+1})}{\Lambda((yz)^{q+1})} &&= \frac{[(p+1)p(p-1)\ldots(p-M+2)(p-M+1)]^2}{[(q+1)q(q-1)\ldots(q-M+2)(q-M+1)]^2 } (\eta\zeta)^{p-q} \\
(\eta\zeta)^{p-q} &&= \frac{[(q+1)q(q-1)\ldots(q-M+2)(q-M+1)]^2 \Lambda((yz)^{p+1})}{[(p+1)p(p-1)\ldots(p-M+2)(p-M+1)]^2 \Lambda((yz)^{q+1})}
\end{eqnarray*}
\end{proof}
\begin{remark}
Here we observe that for the case $M=N$ the $(M+1,N+1)-\nabla-$convex function becomes $(M+1,M+1)-$convex function and hence we retrieve the results from \cite{Asif-popoviciu}.
\end{remark}
\section {\bf{Exponential Convexity}}
Here, In this section $J=(a,b)\in\mathbb{R}$.
\begin{definition}\cite{B}
A function $\omega:J\rightarrow\mathbb{R}$ is exponentially convex on open interval $J$, if $\omega$ is continuous and
\begin{eqnarray*}
\sum_{i,j=1}^{m}\rho_{i}\rho_{j}\,\omega\left(y_{i}\,+\,y_{j}\right)\,\geq\,0
\end{eqnarray*}
 $\forall \,m\in\mathbb{N}$ and $\forall$ \,$\rho_{i}, \rho_{j}\in\mathbb{R}$; such that
$y_{i}+y_{j}\in J$ and $i,j\in\{1,\ldots,m\}$.
\end{definition}
\begin{example}\label{ex1}
A function $y\mapsto ce^{ky}$, example of exponentially convex for $k \in \mathbb{R}$ and constant $c\geq0$.
\end{example}
\begin{proposition}\label{prop}\cite{mnpj}
Let $\omega:J\rightarrow\mathbb{R}$, then given statements are similar:
\begin{enumerate}
  \item [(i)] $\omega$ is exponentially convex on open interval $J$.
  \item [(ii)]$\omega$ is continuous and
\begin{eqnarray*}
\sum_{i,j=1}^{m}\rho_{i}\rho_{j}\,\omega\left(\frac{y_{i}\,+\,y_{j}}{2}\right)\,\geq\,0,
\end{eqnarray*}
$\forall \,\rho_{i}, \rho_{j} \in\mathbb{R}$ and every $y_{i}, y_{j}\in J$; $i,j\in\{1,2,3,\ldots,m-1,m\}$.
\end{enumerate}
\end{proposition}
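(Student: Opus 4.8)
The plan is to prove the two implications separately, each by a direct change of variables that exploits the fact that $J$ is an open interval, hence convex, together with a diagonal constraint hidden in the hypothesis of condition (i). In both directions the continuity requirement transfers verbatim, so the entire content lies in passing between the two quadratic-form conditions: the one built from $\omega(y_i+y_j)$ and the one built from $\omega\left(\frac{y_i+y_j}{2}\right)$.

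For the implication (i) $\Rightarrow$ (ii), I would start from arbitrary points $y_1,\ldots,y_m\in J$ and reals $\rho_1,\ldots,\rho_m$, and set $x_i=\frac{y_i}{2}$. Then $x_i+x_j=\frac{y_i+y_j}{2}$, and since $y_i,y_j\in J$ and $J$ is convex, this midpoint again lies in $J$; thus the admissibility condition $x_i+x_j\in J$ required in the definition of exponential convexity is met. Invoking (i) for the points $x_i$ gives $\sum_{i,j}\rho_i\rho_j\,\omega(x_i+x_j)=\sum_{i,j}\rho_i\rho_j\,\omega\left(\frac{y_i+y_j}{2}\right)\geq 0$, which is exactly the inequality in (ii).

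For the converse (ii) $\Rightarrow$ (i), I would take points $y_1,\ldots,y_m$ and reals $\rho_i$ subject only to $y_i+y_j\in J$ for all $i,j$. The key observation is that the diagonal case $i=j$ forces $2y_i\in J$ for every $i$. Hence I may set $u_i=2y_i\in J$ and apply condition (ii) to the admissible points $u_1,\ldots,u_m\in J$, obtaining $\sum_{i,j}\rho_i\rho_j\,\omega\left(\frac{u_i+u_j}{2}\right)\geq 0$. Since $\frac{u_i+u_j}{2}=y_i+y_j$, this is precisely the inequality in the definition of exponential convexity, establishing (i).

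The step requiring the most care is making sure that each substitution respects the domain restriction of the condition being invoked: in the forward direction this is guaranteed by convexity of $J$, and in the backward direction by extracting $2y_i\in J$ from the diagonal terms of the hypothesis. Beyond this bookkeeping the argument is a routine rescaling, and no appeal to the deeper representation theory of exponentially convex functions is needed.
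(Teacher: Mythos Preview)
Your argument is correct. In both directions the change of variables is the natural one, and you have correctly identified the only delicate point: in (ii) $\Rightarrow$ (i) the hypothesis in the definition of exponential convexity only constrains the sums $y_i+y_j$, so one must extract $2y_i\in J$ from the diagonal before the substitution $u_i=2y_i$ is legitimate.

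As for comparison with the paper: there is nothing to compare against. The paper does not prove this proposition at all; it is quoted verbatim from \cite{mnpj} and stated without proof. Your self-contained argument therefore supplies something the paper omits, and does so by the standard elementary rescaling rather than by any appeal to integral representations or other machinery.
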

\begin{corollary}\label{corr-3}
If  $\omega$ is exponentially convex function on open interval $J$, then the following matrix is a positive semi-definite matrix. $$\left[\omega\left(\frac{y_{i}\,+\,y_{j}}{2}\right)\right]_{i,j=1}^{m}$$  Particularly, i.e.
    \begin{equation*}
det\left[\omega\left(\frac{y_{i}\,+\,y_{j}}{2}\right)\right]_{i,j=1}^{m}\,\geq\,0,
\end{equation*}
$\forall \,m\in\mathbb{N}, \,y_{i}, y_{j}\in J$; $i,j\in\{1,2,3,\cdots,m-1,m\}$.
  \end{corollary}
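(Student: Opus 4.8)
The plan is to read off positive semi-definiteness directly from the equivalent midpoint characterization of exponential convexity given in Proposition \ref{prop}(ii). Fix $m\in\mathbb{N}$ and points $y_1,\ldots,y_m\in J$, and set $A=[a_{ij}]_{i,j=1}^m$ with $a_{ij}=\omega\!\left(\frac{y_i+y_j}{2}\right)$. First I would observe that $A$ is symmetric, since $a_{ij}=\omega\!\left(\frac{y_i+y_j}{2}\right)=\omega\!\left(\frac{y_j+y_i}{2}\right)=a_{ji}$, so that the notion of positive semi-definiteness applies in its usual form.

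Next, for an arbitrary real vector $\rho=(\rho_1,\ldots,\rho_m)^{T}$, the associated quadratic form is precisely the sum appearing in Proposition \ref{prop}(ii):
\begin{equation*}
\rho^{T} A \rho = \sum_{i,j=1}^m \rho_i \rho_j\, a_{ij} = \sum_{i,j=1}^m \rho_i \rho_j\, \omega\!\left(\frac{y_i+y_j}{2}\right).
\end{equation*}
Since $\omega$ is exponentially convex, Proposition \ref{prop} guarantees that this sum is nonnegative for every choice of the $\rho_i$. Hence $\rho^{T} A\rho\ge 0$ for all $\rho\in\mathbb{R}^m$, which is exactly the definition of $A$ being positive semi-definite, establishing the matrix claim.

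Finally, the determinant inequality follows as a standard consequence: a real symmetric positive semi-definite matrix has only nonnegative real eigenvalues, so its determinant, being the product of those eigenvalues, is nonnegative. This yields $\det A\ge 0$ and completes the proof.

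I do not expect any genuine obstacle here. The only point requiring attention is the recognition that the defining inequality of exponential convexity, written in the equivalent midpoint form of Proposition \ref{prop}, is literally the assertion that the quadratic form of $A$ is nonnegative; after that, the passage from positive semi-definiteness to the sign of the determinant is an elementary fact from linear algebra.
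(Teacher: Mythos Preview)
Your argument is correct and is exactly the approach the paper intends: the corollary is stated in the paper without proof, as an immediate consequence of Proposition~\ref{prop}(ii), and your proof simply spells out that the midpoint inequality there is precisely the nonnegativity of the quadratic form of the matrix, with the determinant bound following from standard linear algebra.
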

\begin{corollary}\label{corr}
If $\omega:J\rightarrow \mathbb{R_{+}}$ is an exponentially convex function. Then functin $\omega$ is a $\log$-convex function, where $\forall y,z \in J$ and  $\forall \lambda \in [0,1]$, we have
$$\omega(\lambda y+(1-\lambda)z)\leq {\omega}^{\lambda}(y) {\omega}^{1-\lambda}(z).$$
\end{corollary}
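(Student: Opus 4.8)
The plan is to extract the midpoint form of the desired inequality directly from Corollary \ref{corr-3} and then bootstrap it to the full inequality using the continuity that is built into the definition of exponential convexity. First I would apply Corollary \ref{corr-3} with $m=2$ to an arbitrary pair $y,z\in J$. The associated matrix is
\[
\begin{pmatrix} \omega(y) & \omega\!\left(\tfrac{y+z}{2}\right) \\ \omega\!\left(\tfrac{y+z}{2}\right) & \omega(z) \end{pmatrix},
\]
whose determinant is nonnegative by that corollary. This yields $\omega\!\left(\tfrac{y+z}{2}\right)^{2}\leq \omega(y)\,\omega(z)$, and since $\omega:J\to\mathbb{R_{+}}$ is strictly positive we may take square roots to obtain the log-midpoint inequality $\omega\!\left(\tfrac{y+z}{2}\right)\leq \omega(y)^{1/2}\,\omega(z)^{1/2}$.

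Next I would pass to $\varphi:=\ln\omega$, which is well defined and continuous on $J$; continuity is inherited from the continuity required in the definition of exponential convexity, and well-definedness uses $\omega>0$. Taking logarithms in the midpoint inequality gives $\varphi\!\left(\tfrac{y+z}{2}\right)\leq \tfrac12\varphi(y)+\tfrac12\varphi(z)$, so $\varphi$ is midpoint convex (Jensen-convex) on the interval $J$.

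The decisive step is to upgrade midpoint convexity to full convexity. I would invoke the classical theorem that a continuous midpoint-convex function on an interval is convex; this is exactly where the continuity hypothesis is indispensable, and it is the main obstacle in the sense that the determinant condition alone delivers the inequality only for dyadic-rational weights $\lambda$, with continuity then filling in all $\lambda\in[0,1]$ by density. Consequently $\varphi(\lambda y+(1-\lambda)z)\leq \lambda\varphi(y)+(1-\lambda)\varphi(z)$ for all $\lambda\in[0,1]$, and exponentiating returns $\omega(\lambda y+(1-\lambda)z)\leq \omega(y)^{\lambda}\,\omega(z)^{1-\lambda}$, which is precisely the asserted log-convexity.
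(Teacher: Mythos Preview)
Your argument is correct and is exactly the standard route to this fact: the $2\times 2$ case of Corollary~\ref{corr-3} gives midpoint log-convexity, and the continuity built into Definition~6.1 upgrades midpoint convexity of $\ln\omega$ to full convexity.

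As for the comparison you were asked about: the paper does not actually supply a proof of Corollary~\ref{corr}. It is recorded there as a known consequence of exponential convexity (the surrounding material is attributed to \cite{mnpj}), and is simply invoked later in the proof of Theorem~\ref{nexpthem}. So your write-up fills in what the paper leaves implicit, and there is nothing further to reconcile.
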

Let$\quad D=\{\psi^{(q)}:(0,\infty)\times(0,\infty)\rightarrow\mathbb{R}| q\in \mathbb{R}\}$ be a family of functions stated as:
\begin{equation*}\label{a2}
\psi^{(q)}(y,z)\,=\,\left\{
                \begin{array}{rl}
                   \displaystyle \frac{(y+k_{1})^{q}(z-k_{2})^{q}}{[q(q-1)\cdots(q-M)]^2},&\quad \;q\not\in \{0,1,\ldots,M\}\\
                  \displaystyle  \frac{ (y+k_{1})^{q}(z-k_{2})^{q} [\log (y+k_{1})(z-k_{2})]^2}{2[q!(M-q)!]^2},&\quad \;q \in \{0,1,\ldots,M\}
                \end{array}
              \right.
\end{equation*}
Clearly $\psi^{(q)}_{(M+1,M+1)}(y,z)=[(y+k_{1})(z-k_{2})]^{q-M-1}=e^{(q-M-1)\log[(y+k_{1})(z-k_{2})]}$ for $(y+k_{1},z-k_{2})\in (0,\infty)\times(0,\infty)$ so $\psi^{(q)}$ is completely monotonic function of order $(M+1,M+1)$ and $q\mapsto \psi^{(q)}_{(M+1,M+1)}$ is an exponentially convex function $f$ on real numbers. We can say that every positive exponentially convex  is log$-$convex function, by using the above mention Corollary \ref{corr} \\
Now, at this stage we can give next theorem which is stated as:
\begin{theorem}\label{nexpthem}
Let $\Lambda$ be a linear functional as stated in $(\ref{3-3})$ and the conditions $(\ref{3-4})$, $(\ref{3-5})$, $(\ref{3-6})$ and $(\ref{3-7})$ be true for function $P$ in Theorem  $\ref{positive}$ and $\psi^{(q)}$ be  a completely monotonic function defined in previous, then following points hold:
\begin{itemize}
\item[(i)] $q\mapsto \Lambda(\psi^{(q)})$ is continuous on $\mathbb{R}$.
\item[(ii)] $q\mapsto \Lambda(\psi^{(q)})$ is exponentially convex function on $\mathbb{R}$.
\item[(iii)] If $q\mapsto\Lambda(\psi^{(q)})$ is positive function on $\mathbb{R}$, then the $q\mapsto\Lambda(\psi^{(q)})$ is log-convex on $\mathbb{R}$. Moreover, the following inequality holds for $r<s<t; \,r,\,s,\,t\,\in J$
\begin{equation}\label{lypo}
[\Lambda_k(f_s)]^{t-r}\, \le\, [\Lambda_k(f_r)]^{t-s}\,[\Lambda_k(f_t)]^{s-r}%\,\,\,\text{for}\,\,\,k=1,\ldots,4
\end{equation}
\item[(iv)] For every $m\in\mathbb{N}$ and $q_{1},\ldots,q_{m}\in \mathbb{R}$, the following matrix is positive semi-definite.
\begin{equation*}
\left[\Lambda(\psi^{(\frac{q_{i}+q_{j}}{2})})\right]_{i,j=1}^{m}
\end{equation*}
Particularly,
\begin{equation*}\label{eq5}
\mathrm{det}\left[\Lambda(\psi^{(\frac{q_{i}+q_{j}}{2})})\right]_{i,j=1}^{m}\,\geq\,0 %\quad \text{for}\;\;\;k=1,\ldots6.
\end{equation*}
\item[(v)]  If $q\mapsto\Lambda(\psi^{(q)})$ is differentiable on $\mathbb{R}$, and $\forall s,t,u,v\in \mathbb{R}$, $\backepsilon$ $t\leq v$ and  $s\leq u$, then we have
        \begin{equation}\label{3-9}
      \mathfrak{ M}_{s,t}(y,z)\leq \mathfrak{ M}_{u,v}(y,z)
        \end{equation}
  where
      \begin{equation*}
        \mathfrak{ M}_{s,t}(y,z)=\begin{cases}
        \displaystyle \left(\frac{\Lambda(\psi^{(s)})}{\Lambda(\psi^{\{t\}})}\right)^{\frac{1}{s-t}}\quad ,\,\,\,s\neq t\\
        \displaystyle  \exp\left(\frac{\frac{d}{ds}\Lambda(\psi^{(s)})}{\Lambda(\psi^{(s)})}\right),\,\,s=t
        \end{cases}
      \end{equation*}
     for $\psi^{(s)},\psi^{(t)}\in D$.
 \end{itemize}
\end{theorem}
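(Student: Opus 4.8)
The plan is to handle the continuity claim (i) directly, and then to use it together with a single perfect-square computation to establish exponential convexity (ii); parts (iii)--(v) will follow from the general machinery of this section.

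For (i), I would observe that the piecewise definition of $\psi^{(q)}$ is arranged precisely so that $q\mapsto\psi^{(q)}(y,z)$ is continuous on all of $\mathbb{R}$ for each fixed $(y,z)$, the branch on $\{0,1,\ldots,M\}$ being the limit of the generic branch at those integers. Since $\Lambda$ integrates against $P$ over the compact rectangle $[a,b]\times[c,d]$, I would pass the limit in $q$ under the integral sign (justified by local uniform bounds on $\psi^{(q)}$ and dominated convergence) to conclude that $q\mapsto\Lambda(\psi^{(q)})$ is continuous.

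Part (ii) is the crux. Fixing $m\in\mathbb{N}$ and reals $q_1,\ldots,q_m$ and $\rho_1,\ldots,\rho_m$, I would introduce the auxiliary function
$$\Xi(y,z)=\sum_{i,j=1}^{m}\rho_i\rho_j\,\psi^{\left(\frac{q_i+q_j}{2}\right)}(y,z)$$
and compute its top mixed derivative using the stated identity $\psi^{(q)}_{(M+1,M+1)}(y,z)=e^{(q-M-1)\xi}$ with $\xi=\log[(y+k_1)(z-k_2)]$. Factoring the exponent gives
$$\Xi_{(M+1,M+1)}(y,z)=e^{-(M+1)\xi}\left(\sum_{i=1}^{m}\rho_i\,e^{q_i\xi/2}\right)^{2}\ge 0.$$
Because the relevant order here is $(M+1,M+1)$, one has $(-1)^{M+N}=(-1)^{2M}=1$, so this nonnegativity says exactly that $\Xi$ is $(M+1,M+1)-\nabla-$convex. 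Theorem \ref{positive} then yields $\Lambda(\Xi)\ge 0$, that is, $\sum_{i,j}\rho_i\rho_j\,\Lambda(\psi^{((q_i+q_j)/2)})\ge0$, which combined with the continuity from (i) is precisely the midpoint characterization of exponential convexity in Proposition \ref{prop}.

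The remaining items are then immediate. For (iv) I would apply Corollary \ref{corr-3} to the exponentially convex map $q\mapsto\Lambda(\psi^{(q)})$ to obtain the positive semidefinite matrix and its nonnegative determinant. For (iii), under the positivity hypothesis Corollary \ref{corr} upgrades exponential convexity to log-convexity, and the Lyapunov-type inequality (\ref{lypo}) is just convexity of $q\mapsto\log\Lambda(\psi^{(q)})$ written for $r<s<t$ after expressing $s$ as a convex combination of $r$ and $t$. Finally (v) follows because $\mathfrak{M}_{s,t}$ is the exponential of a divided difference of the convex function $q\mapsto\log\Lambda(\psi^{(q)})$, and convexity makes such divided differences nondecreasing in each endpoint, yielding (\ref{3-9}). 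The only genuine technical point I anticipate is justifying the interchange of limit and differentiation with the integral in (i) and (ii); the perfect-square identity driving (ii) is itself immediate once the derivative formula for $\psi^{(q)}$ is invoked.
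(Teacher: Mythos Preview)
Your proposal is correct and follows essentially the same route as the paper: the same auxiliary function $\Xi$ (the paper calls it $\eta$), the same observation that its $(M+1,M+1)$ mixed derivative is nonnegative so that Theorem~\ref{positive} applies, and the same appeals to Corollaries~\ref{corr-3} and~\ref{corr} and to the slope/divided-difference monotonicity of convex functions for (iii)--(v). The only cosmetic differences are that you invoke dominated convergence for (i) where the paper computes the limits explicitly via L'H\^opital, and you write out the perfect-square factorization in (ii) where the paper simply cites that $q\mapsto\psi^{(q)}_{(M+1,M+1)}$ is exponentially convex.
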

\begin{proof}
(i) For fixed $M \in \mathbb{N} \cup \{0\}$,
using L' H$\hat{{\rm o}}$pital rule two times and then apply limit, we obtain
\begin{eqnarray*}
\lim\limits_{q \rightarrow 0 }\Lambda(\psi^{(q)})&=&\lim\limits_{q \rightarrow 0 } \displaystyle  \frac{\int _{a}^{b}\int _{a}^{b}P(y,z)(y+k_{1})^{q}(z-k_{2})^{q}dzdy}{[q(q-1)\cdots(q-M)]^2}\\&=&\displaystyle  \frac{\int _{a}^{b}\int _{a}^{b}P(y,z)[\log (y+k_{1})(z-k_{2})]^2dzdy}{2{[M!]}^2}\\
&=&\Lambda(\psi^{(0)}).
\end{eqnarray*}
Similarly, we can show $$\lim\limits_{q \rightarrow k }\Lambda(\psi^{(q)})=\Lambda(\psi^{(k)},)\quad k\in \{1,2,\ldots,M\}.$$
(ii) Let us define the function
$$\eta(y,z)=\sum_{i,j=1}^{k}\alpha_{i}\alpha_{j}\psi^{(\frac{q_i+q_j}{2})}(y,z),$$
$q_{i}\in \mathbb{R} ,\alpha_i\in\mathbb{R}$ and $i\in\{1,2,\ldots,k\}.$\\
Since the function $q\mapsto \psi^{(q)}_{(M+1,M+1)}$ is exponentially convex function, we write
$$\eta_{(M+1,M+1)}=\sum_{i,j=1}^{k}\alpha_{i}\alpha_{j}\psi^{(\frac{q_i+q_j}{2})}_{(M+1,M+1)}\geq0,$$ which implies that $\eta$ is $\nabla-$convex function of order $(M+1,M+1)$ on $\mathbb{R_{+}}\times\mathbb{R_{+}}$ and we have
$\Lambda(\eta)\geq0$. Hence
$$\sum_{i,j=1}^{k}\alpha_{i}\alpha_{j}\Lambda(\psi^{(\frac{q_i+q_j}{2})})\geq0.$$
On the behalf of above working we can summarize that function $q\rightarrow \Lambda(\psi^{(q)})$ is  exponentially convex on real numbers.\\
\indent(iii)    It follows from $(ii)$ and Corollary $\ref{corr}$. As the function $t\mapsto \Lambda (f_t)$ is log-convex i.e. $\ln\Lambda(f_t)$ is convex. Now using the definition of convex functions from \cite[p.~2]{redbook}, we have
\begin{equation*}\label{def2-convex-ineq}
\left(y_3-y_2\right)f(y_1)+\left(y_1-y_3\right)f(y_2)+
\left(y_2-y_1\right)f(y_3)\geq 0
\end{equation*}
holds for each $y_1,\,y_2,\,y_3 \in I$ such that $y_1 < y_2 < y_3$,
which gives (\ref{lypo}), i.e.,
\begin{equation*}
    \ln[\Lambda(f_{s})]^{t-r}\,\leq\,
    \ln[\Lambda(f_{r})]^{t-s}\,+\,\ln[\Lambda(f_{t})]^{s-r},
\end{equation*}
\indent (iv)    It is consequence of Corollary \ref{corr-3}.\\
\indent (v)   We recall an another definition of convex function $\varphi $ from \cite [p.2]{redbook}
\begin{equation}\label{3-10}
  \frac{\varphi\left(s\right)\,-\,\varphi\left(t\right)}{s\,-\,t}\,\leq\, \frac{\varphi\left(u\right)\,-\,\varphi\left(v\right)}{u\,-\,v},
\end{equation}
$ \forall \, s, t,u,v \in J$ $\backepsilon$ $s\leq u,\,t\leq v,\,s\neq t,\,u\neq v$. \\Since by (iii), $ \Lambda(\psi^{(q)}) $ is $\log$-convex, so by setting $\varphi(y)=\log\Lambda(\psi^{(y)})$ in (\ref{3-10}) we have
\begin{equation}\label{3-11}
\frac{\log\Lambda(\psi^{(s)})\,-\log\Lambda(\psi^{\{t\}})}{s-t} \,\leq\,
\frac{\log\Lambda(\psi^{(u)})-\log\Lambda(\psi^{(v)})}{u-v}
\end{equation}
for  $s\leq u,\,t\leq v,\,s\neq t,\,u\neq v$, which is similar to (\ref{3-9}).
The cases for $s= t$ and$/$or $u= v$ are simply getting from (\ref{3-11}) by using the respective limits.
\end{proof}

\section{Examples with Applications}
At last topic of our paper, construct different examples of completely monotonic, exponentially convex functions and applications by using different classes of functions $F=\{f^{q}:q \in I\subset\mathbb{R}\}$. Let us consider the following examples

\begin{example}
 Let $F_{1}= \{\zeta^{q}:\mathbb{R_{+}}\times \mathbb{R_{+}}\rightarrow
\mathbb{R_{*}} | q\in\mathbb{R}\}$, be a family of functions which is stated as
\[
\label{l2.102}\zeta^{q}(y,z)\,=\,\left\{
\begin{array}
[c]{ll}%
\frac{e^ {q(y+z)}}{q^{2m+2}} & ,\quad\hbox{q\,$\neq$ 0}\\
\frac{(y+z)^{2m+2}}{(2m+2)!} & ,\quad\hbox{q\,= 0}
\end{array}
\right.
\]
Since $\zeta^{q}_{(M+1,M+1)}(y,z)=e^{q(y+z)}>0$, the function $\zeta^{q}(y,z)$
is a $(M+1,M+1)-$completely monotonic on $\mathbb{R}$, $\forall$ $q\in\mathbb{R}$ and $q\to
\zeta^{q}_{(M+1,M+1)}(y,z)$ is exponentially convex by definition.
\end{example}

\begin{example}
Let $F_{3}=\{\phi^{q}:\mathbb{R_{+}}\times\mathbb{R_{+}}\rightarrow\mathbb{R_{+}} | q\in\mathbb{R_{+}}\}$, be a family of functions which is stated as
\begin{eqnarray*}
\phi^{q}(y,z)= \left\{
\begin{array}
[c]{ll}
\frac{(yz)^{q}}{[q(q-1)\cdots(q-m)]^{2}} & ,\quad q\not \in {\{0,1,2,\ldots,m-1,m\}}\\
\frac{(yz)^{q}\ln(yz)^{2}}{2[q!(m-q)!]^{2}} & ,\quad q\in {\{0,1,2,\ldots,m-1,m\}}
\end{array}
\right.
\end{eqnarray*}
Since $\phi^{q}_{(M+1,M+1)}(y,z)= e^{(q-m-1)\ln(yz)}>0$, the function $\phi^{q}(y,z)$
is a $(M+1,M+1)-$completely monotonic on $\mathbb{R}$, $\forall$ $q\in\mathbb{R}$ and $q\to
\phi^{q}(y,z)_{(M+1,M+1)}$ is exponentially convex by definition.
\end{example}
In all the above examples we can use similar arguments as in proof of Theorem $\ref{nexpthem}$.

\end{document}